\theoremstyle{plain} \declaretheorem[name=Theorem, Refname={Theorem,Theorems}]{tm} \Crefname{tm}{Theorem}{Theorems}
\numberwithin{equation}{section}
\newtheorem{lm}{Lemma} \numberwithin{lm}{section} \Crefname{lm}{Lemma}{Lemmas}
 \Crefname{cl}{Corollary}{Corollaries}
\newtheorem{prop}[lm]{Proposition} \Crefname{prop}{Proposition}{Propositions}
\newtheorem*{tm*}{Theorem}
\newtheorem*{lm*}{Lemma}
\theoremstyle{definition} \newtheorem{df}[lm]{Definition} \Crefname{df}{Definition}{Definitions}
 \Crefname{example}{Example}{Examples}
\theoremstyle{remark} \newtheorem{rk}[lm]{Remark} \Crefname{rk}{Remark}{Remarks}
\newcommand{\Ker}{\mathop{\mathrm{Ker}}\nolimits}
\newcommand{\E}{{\mathrm{E}}}
\newcommand{\GG}{{\mathrm{G}_{\mathrm{sc}}}}
\newcommand{\Um}{\mathop{\mathrm{Um}}\nolimits}
\newcommand{\St}{\mathop{\mathrm{St}}\nolimits}
\newcommand{\GL}{\mathop{\mathrm{GL}}\nolimits}
\newcommand{\Kt}{\mathop{\mathrm{K_2}}\nolimits}
\newcommand{\Ko}{\mathop{\mathrm{K_1}}\nolimits}
\newcommand{\epi}{\twoheadrightarrow}
\newcommand{\inv}{^{-1}}
\newcommand{\rA}{\mathsf{A}}
\newcommand{\rB}{\mathsf{B}}
\newcommand{\rC}{\mathsf{C}} 
\newcommand{\rD}{\mathsf{D}} 
\newcommand{\rE}{\mathsf{E}}
\title{On centrality of even orthogonal $\Kt$}
\keywords {Steinberg groups, $K_2$-functor. {\em Mathematical Subject Classification (2010):} 19C09}
\author {Andrei Lavrenov}
\email {avlavrenov {\it at} gmail.com}
\author{Sergey Sinchuk}
\email{sinchukss {\it at} gmail.com}
\date {\today}
\thanks{The authors of the present paper acknowledge the financial support of the Russian Science Foundation grant 14-11-00297}
\begin{document}

\begin{abstract} We give a short uniform proof of centrality of $\Kt(\Phi, R)$ for all simply-laced root systems $\Phi$ of rank $\geq 3$.
\end{abstract}

\maketitle

\section*{Introduction}
The aim of this paper is to establish Quillen's local-global principle and the centrality of even orthogonal $\Kt$ for an arbitrary commutative ring $R$.

Recall that to each reduced irreducible root system $\Phi$ and a commutative ring $R$ one can associate a split simple simply-connected group $\GG(\Phi, R)$ called a \emph{Chevalley group}.
The {\it elementary group} $\E(\Phi, R)$ is defined as the subgroup of $\GG(\Phi, R)$ generated by elementary root unipotents $t_\alpha(\xi)$, $\xi\in R$, $\alpha\in \Phi$, see~\cite{Ta, St78}.
By definition, the Steinberg group $\St(\Phi, R)$ is presented by generators $x_\alpha(\xi)$, which model the unipotents $t_\alpha(\xi)$, and Steinberg relations (see~\cref{sec:prelim}). 
Thus, there is a well-defined map $\phi\colon\St(\Phi, R)\to \GG(\Phi, R)$ sending $x_\alpha(\xi)$ to $t_\alpha(\xi)$ whose image is $\E(\Phi, R)$.

By a theorem of Taddei (see~\cite{Ta}) the elementary group $\E(\Phi, R)$ is a normal subgroup of $\GG(\Phi, R)$ when $\Phi$ has rank at least $2$.
Thus, similarly to the definition of algebraic $\mathrm{SK}_1$ and $\Kt$-functors, one can define the groups $\Ko(\Phi, R)$, $\Kt(\Phi, R)$ as the cokernel and kernel of $\phi$:
$$\xymatrix{1 \ar[r] & \Kt(\Phi, R) \ar[r] & \St(\Phi, R) \ar[r] & \GG(\Phi, R) \ar[r] & \Ko(\Phi, R) \ar[r] & 1.}$$

It is a classical theorem of M.~Kervaire and R.~Steinberg that the stable Steinberg group $\St(R)$ is the universal central extension of $\E(R)$, see \cite[Theorem~III.5.5]{Kbook}.
The unstable analogue of the ``universality'' part of this theorem has been known since 1970's for $\Phi$ of rank $\ell \geq 5$ (or even $\ell \geq 4$ for $\Phi=\rA_\ell, \rB_\ell, \rC_\ell$), see \cite{St1, KaS}.
However, until recently, the analogue of the ``centrality'' part was only known in the special case $\Phi=\rA_\ell$, $\ell\geq 3$, see~\cite[Corollary~2]{vdK}.
The best known result for other classical $\Phi$ was that the centrality holds ``in the stable range'',
i.\,e. when the dimension of $R$ is suffiently small as compared to the rank of $\Phi$, see~\cite[Corollary~3.4]{St78}.

Recently the authors of this paper have established the centrality of $\Kt(\rC_\ell, R)$ ($\ell\geq 3$) and $\Kt(\rE_\ell, R)$ ($\ell=6,7,8$) for an arbitrary commutative ring $R$, see \cite{Lav, SCh}.
In turn, the key result of this paper is the following theorem.
\begin{tm}[``Centrality of $\Kt$''] \label{tm:centrality}  Let $R$ be an arbitrary commutative ring.
Then, for $\ell\geq 4$ the group $\Kt(\rD_\ell, R)$ is a central subgroup of $\St(\rD_\ell, R)$. \end{tm}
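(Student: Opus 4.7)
The plan is to adapt the two-step scheme the authors already used for $\rC_\ell$ in \cite{Lav} and for $\rE_\ell$ in \cite{SCh}: a Quillen-type local--global principle for a relative form of the Steinberg group, followed by an explicit ``another presentation'' in the local case. Concretely, I would first introduce relative objects $\St(\rD_\ell, R, I)$ and $\Kt(\rD_\ell, R, I)$ attached to an ideal $I \subseteq R$, and prove that if $\Kt(\rD_\ell, R_\mathfrak{m}, I_\mathfrak{m})$ is central in $\St(\rD_\ell, R_\mathfrak{m}, I_\mathfrak{m})$ for every maximal $\mathfrak{m} \subset R$, then the same holds over $R$. This patching step follows the familiar Quillen--Suslin--Taddei template \cite{Ta} and, specialised to $I = R$, reduces \Cref{tm:centrality} to the case of a local ring.

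For local $R$, the decisive structural feature of $\rD_\ell$ is the maximal parabolic $P_{\ell-1}$ attached to a half-spinor node: its Levi $L_{\ell-1}$ has type $\rA_{\ell-1}$ and its unipotent radical $U_{\ell-1}$ is \emph{abelian}, naturally isomorphic to $\wedge^2 R^\ell$ as an $L_{\ell-1}$-module. This mirrors the role of the first parabolics of $\rE_6$ (Levi $\rD_5$) and of $\rE_7$ (Levi $\rE_6$) exploited in \cite{SCh}, so the same machinery ought to transfer. I would show that $\St(\rD_\ell, R, I)$ is isomorphic to the amalgam of $\St(\rA_{\ell-1}, R, I)$ with lifts of the abelian groups $U_{\ell-1}(I)$ and $U_{\ell-1}^-(I)$, subject only to the obvious relations encoding the $L_{\ell-1}$-action on the radicals and a single controlled commutator between opposite radicals. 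Commutativity within one radical is automatic because no sum $\alpha + \beta$ with $\alpha,\beta\in U_{\ell-1}$ is a root.

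Granted this presentation, centrality of $\Kt(\rD_\ell, R)$ reduces formally to two ingredients: the centrality of $\Kt(\rA_{\ell-1}, R)$, which is van der Kallen's theorem \cite{vdK} and applies precisely because $\ell - 1 \geq 3$, together with the inner nature of the Levi action on the abelian radical. Any kernel element is then represented by a word in the amalgam's generators, and its commutator with any Steinberg generator is seen to be trivial using these two facts.

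The principal obstacle is proving the amalgamation / ``another presentation'' itself. What has to be verified is that every Steinberg relation in $\St(\rD_\ell, R, I)$ involving a Levi root $\alpha$ and a radical root $\beta$ is already a consequence of the relation encoding the $L_{\ell-1}$-action on $x_\beta$. This comes down to a finite list of commutator identities inside rank-$3$ and rank-$4$ simply-laced subsystems $\rA_3, \rD_4 \subset \rD_\ell$, together with careful bookkeeping for the ideal $I$ --- in particular, ruling out hidden ``higher'' relations among the lifts of the abelian radical generators. Once this structural result is in place, the proof of the theorem is essentially a short formal consequence.
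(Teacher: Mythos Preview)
Your plan diverges from the paper at both main steps, and each step as you describe it has a genuine gap.

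The local--global reduction you propose --- ``$\Kt(\rD_\ell,R_\mathfrak m,I_\mathfrak m)$ central for all maximal $\mathfrak m$ implies $\Kt(\rD_\ell,R,I)$ central'' --- does not follow from Quillen--Suslin--Taddei style patching, because $\St(\Phi,-)$ does not commute with localisation: the maps $\St(\Phi,R)\to\St(\Phi,R_\mathfrak m)$ need not be jointly injective, so a non-central kernel element can become trivial in every localisation and you learn nothing. The paper (as in \cite{SCh,Lav}) does not reduce to local rings at all. It passes to the polynomial ring, proves the LG principle of \Cref{tm:lg-principle} for elements of $\St(\Phi,R[X],XR[X])$, and extracts centrality from that via a dilation argument; this is in turn reduced to the Tulenbaev lifting property of \Cref{df:tlp}.

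The amalgamation you propose --- Levi $\St(\rA_{\ell-1},R,I)$ glued to lifts of the two abelian radicals --- is also not what \cite{SCh} does: for $\rE_\ell$ that paper reduces via the amalgamation over all $\rA_4$ root subsystems, not via a single maximal parabolic, so the analogy you draw with ``first parabolics of $\rE_6,\rE_7$'' is a misreading. More importantly, proving your parabolic presentation over an arbitrary commutative ring is at least as hard as the theorem itself; the ``single controlled commutator between opposite radicals'' must in particular encode, for every radical root $\alpha$, the full rank-one interaction between $x_\alpha$ and $x_{-\alpha}$, which is exactly where Steinberg presentations over rings are delicate. What the paper actually does is invoke the amalgamation of $\St(\Phi,R,I)$ over \emph{all} $\rA_3$ root subsystems (already established in \cite{SCh}) to reduce the lifting property for $\rD_\ell$ to the single case $\Phi=\rA_3$. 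Since Tulenbaev's original construction of the lift $T$ needs $n\ge 5$, the case $n=4$ is new, and the technical heart of the paper is the transpose-symmetric presentation $\St^*(n,R,I)$ of \cref{sec:yap,sec:lgp} that makes $T$ constructible for $n=4$.
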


Centrality of $\Kt$ is a corollary of the following analogue of Quillen's local-global principle,
very much in the same fashion as the normality theorem for the elementary subgroup is a corollary of the local-global principle for $\Ko$, cf.~\cite[Theorem~3.1]{Basu05}.
\begin{tm}[``Quillen's LG-principle for $\Kt$''] \label{tm:lg-principle} 
Let $\Phi$ be an irreducible simply-laced root system of rank $\geq 3$ and $R$ be a commutative ring.
Then an element $g\in\St(\Phi,\,R[X])$ satisfying $g(0)=1\in\St(\Phi,\,R)$ is trivial in $\St(\Phi,\,R[X])$
if and only if the images $\lambda_M^*(g)\in\St(\Phi,\,R_M[X])$ of $g$ under localisation maps $\lambda_M\colon R\to R_M$ are trivial for all maximal ideals $M\trianglelefteq R$. \end{tm}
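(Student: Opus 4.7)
The ``only if'' direction is immediate from functoriality of $\St(\Phi,-)$. For the nontrivial direction I follow Quillen's patching strategy, adapted to the non-abelian Steinberg setting along the lines of~\cite{vdK}. The central technical ingredient is a \emph{dilation lemma}: given $s\in R$ and $g\in\St(\Phi,R[X])$ with $g(0)=1$, if $\mu_s^*(g)=1$ in $\St(\Phi,R_s[X])$ then $g(s^N X)=1$ in $\St(\Phi,R[X])$ for some $N\geq 1$. Establishing this is the main obstacle, and is discussed in the final paragraph.

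Granting the dilation lemma, I pass to the polynomial ring $R[X,Y]$ and consider
$$d(X,Y):=g(X+Y)\cdot g(Y)^{-1}\in\St(\Phi,R[X,Y]),$$
which satisfies $d(0,Y)=1$ and $d(X,0)=g(X)$. Set
$$I:=\{s\in R\mid d(sX,Y)=1\text{ in }\St(\Phi,R[X,Y])\}.$$
A direct check shows $I$ is an ideal: closure under multiplication by $r\in R$ follows from substituting $X\mapsto rX$. For additivity, given $s,t\in I$, substituting $Y\mapsto tX+Y$ in the identity $d(sX,Y)=1$ yields $g((s+t)X+Y)=g(tX+Y)$, while $d(tX,Y)=1$ gives $g(tX+Y)=g(Y)$; composing these gives $d((s+t)X,Y)=1$.

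It remains to show $I=R$. Fix a maximal ideal $M\trianglelefteq R$. Since $\lambda_M^*(g)=1$ we also have $\lambda_M^*(d)=1$, and because $\St(\Phi,-)$ commutes with filtered colimits of rings there exists $s\in R\setminus M$ with $\mu_s^*(d)=1$ already in $\St(\Phi,R_s[X,Y])$. Applying the dilation lemma over the base ring $R[Y]$ to $h(X):=d(X,Y)$ (which has $h(0)=1$), we obtain $N$ with $d(s^N X,Y)=1$, so $s^N\in I$. Since $M$ is prime and $s\notin M$, also $s^N\notin M$, hence $I\not\subseteq M$. As this holds for every maximal $M$, we conclude $I=R$; in particular $1\in I$, so $d(X,Y)=1$, and setting $Y=0$ yields $g(X)=1$ as required.

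The hard step is the dilation lemma, and this is where the simply-laced hypothesis and the rank condition $\geq 3$ should be essential. Its proof must rewrite a word in $\St(\Phi,R_s[X])$ witnessing triviality of $\mu_s^*(g)$ as a word in $\St(\Phi,R[X])$, absorbing negative powers of $s$ via the substitutions $X\mapsto s^N X$. This typically requires an elaborate calculus of relative Steinberg groups together with many Chevalley commutator identities, available in sufficient supply precisely when the rank is at least $3$. The $\rA_\ell$ case is treated in~\cite{vdK} and the $\rC_\ell$ and $\rE_\ell$ cases in~\cite{Lav,SCh}; the new content needed here is the analogous lemma for $\rD_\ell$, $\ell\geq 4$, which one expects to establish by exploiting the abundant $\rA$-type subsystems of $\rD_\ell$ (in particular $\rA_3\subset\rD_4$) together with the bookkeeping afforded by the rank hypothesis.
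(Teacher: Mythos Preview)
Your reduction of the LG-principle to a dilation lemma via the Quillen ideal $I=\{s\in R\mid d(sX,Y)=1\}$ is correct and is exactly the route the paper takes (by citing \cite[Lemma~15, Theorem~2]{SCh}). So the formal skeleton of your argument matches the paper. The difficulty is that you have not actually proved anything new: the entire content of the theorem lies in the dilation lemma, and your final paragraph is a sketch of intent, not a proof.

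There is also a genuine gap in your plan for that lemma. First, the linear case you attribute to \cite{vdK} is due to Tulenbaev~\cite{Tul}, and his argument requires $n\geq 5$ (i.e.\ $\Phi=\rA_\ell$ with $\ell\geq 4$); it breaks down for $\rA_3$. Second, the paper's strategy for $\rD_\ell$ is indeed to reduce to $\rA$-type subsystems, but the reduction is not a vague ``bookkeeping'' argument: it uses an amalgamation theorem expressing the relative Steinberg group $\St(\Phi,R,I)$ as a colimit of the relative groups $\St(\Psi,R,I)$ over all $\rA_3$-subsystems $\Psi\subset\Phi$ (see the unnumbered theorem in \cref{sec:patching}). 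This reduces the construction of the Tulenbaev lifting map $T_\Phi$ to the single case $\Psi=\rA_3$, which is precisely the case \emph{not} covered by any prior work.

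What you are missing, then, is the $\rA_3$ dilation/lifting step itself. The paper handles it by introducing a new ``transpose-symmetric'' presentation $\St^*(n,R,I)$ of the relative Steinberg group with two families of generators $F(u,v)$ and $S(u,v)$, and correspondingly two families of Tulenbaev-type elements $X_{u,v}(a)$ and $Y_{u,v}(a)$ in the absolute Steinberg group. The point is that Tulenbaev's original presentation carries an additivity relation \eqref{add3} in the \emph{first} argument whose verification genuinely needs $n\geq 5$; the symmetric presentation replaces this by the coincidence relation \eqref{coinc}, which can be checked for $n=4$ via the commutator trick of \cref{xeqy}. None of this machinery appears in your proposal, and without it the dilation lemma for $\rA_3$ (hence for $\rD_\ell$) remains unproved.
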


In the special case $\Phi=\rA_\ell$, $\ell\geq 4$, the assertion of \cref{tm:lg-principle} was obtained by Tulenbaev, see~\cite[Theorem~2.1]{Tul}.
Despite the fact that the key ingredient in Tulenbaev's proof (``another presentation'' of the Steinberg group) is available for $\Phi=\rA_3$, Tulenbaev's proof does not work in this case for technical reasons.
On the other hand, in~\cite{SCh} the second-named author developed a patching technique which allowed him to deduce \cref{tm:lg-principle} from Tulenbaev's result in another special case $\Phi=\rE_\ell$, $\ell=6,7,8$.
Thus, \cref{tm:lg-principle} contains two cases that have not been previously known: $\Phi=\rA_3$ and $\Phi=\rD_\ell$, $\ell\geq 4$.
In rank $2$ there are known counterxamples to centrality of $\Kt$, which shows that our assumption on the rank of $\Phi$ in \cref{tm:lg-principle} is the best possible, see~\cite{W}.
We also note that recently the first-named author has obtained an analogue of \cref{tm:lg-principle} for $\Kt(\rC_\ell, R)$ under the assumption $\ell\geq 3$, see~\cite{Lav2}.

The paper is organised as follows. 
After introducing the preliminaries we explain in \cref{sec:patching} how the patching technique of~\cite{SCh} allows one to reduce \cref{tm:lg-principle} to the special case $\Phi=\rA_3$.
Next, in \cref{sec:yap} we describe a new ``transpose-symmetric'' presentation for the relative linear Steinberg group.
Finally, this presentation is used in~\cref{sec:lgp} to prove \cref{tm:lg-principle} in the case $\Phi=\rA_3$.

\section{Preliminaries} \label{sec:prelim}
Throughout this paper $R$ denotes an associative commutative ring with identity.
All commutators are left-normed, i.\,e. $[x,\,y]=xyx\inv y\inv$. 
We denote by $R^n$ the free right $R$-module with a basis $e_1,\ldots,e_n$ and by $\Um(n,\,R)$ the set of unimodular columns $v\in R^n$, i.\,e. 
columns whose components generate $R$ as an ideal.

First of all, recall that for a simply laced root system $\Phi$ of rank $\geq 2$ the Steinberg group $\St(\Phi, R)$ 
is defined by generators $x_\alpha(r)$, $\alpha\in\Phi$, $r\in R$ and the following relations:
\begin{align}
 x_\alpha(r) x_\alpha(s) & = x_\alpha(r+s), & \nonumber \\
 [x_\alpha(r),  x_\beta(s)] & = x_{\alpha+\beta}(N_{\alpha,\beta}\cdot rs), &\ \alpha,\beta\in \Phi,\ \alpha+\beta\in\Phi, \nonumber \\
 [x_\alpha(r),  x_\beta(s)] & = 1, &\ \alpha,\beta\in \Phi,\ \alpha+\beta \not \in \Phi \cup \{0\}. \nonumber
\end{align}
Here $N_{\alpha,\beta}$ are the structure constants of $\Phi$ which are equal to $\pm 1$.
We use a more familiar notation for root unipotents in the case $\Phi=\rA_\ell$:
$x_{ij}(r),\ 1\leq i\neq j\leq n,\ r\in R,$
\setcounter{equation}{0}
\renewcommand{\theequation}{S\arabic{equation}}
\begin{align}
x_{ij}(r)x_{ij}(s)      & = x_{ij}(r+s),& \label{add0}\\
[x_{ij}(r),\,x_{hk}(s)] & = 1,& \text{ for }h\neq j,\ k\neq i, \label{ccf1}\\
[x_{ij}(r),\,x_{jk}(s)] & = x_{ik}(rs).& \label{ccf2}
\end{align}
In this situation the elementary root unipotents coincide with elementary transvections $t_{ij}(r)=1+r \cdot e_{ij}$, $1\leq i\neq j\leq n$, $r\in R$
and the elementary subgroup $\E(\rA_{n-1}, R)$ is denoted by $\E(n, R)$.
Here $1$ denotes the identity matrix and $e_{ij}$ is the standard matrix unit. 
The natural projection $\phi\colon\St(n,\,R)\rightarrow\E(n,\,R)$ sends $x_{ij}(r)$ to $t_{ij}(r)$.

In \cite{Sus} Suslin showed that for $n\geq 3$ the elementary group $\E(n, R)$ coincides with the subgroup of $\GL(n,\,R)$ generated by matrices
of the form $t(u,\,v)=1+uv^t$ where $u\in\Um(n,\,R)$, $v\in R^n$ and $u^tv=0$. Here $u^t$ stands for the transpose of $u$.
This result clearly implies that $\E(n,\,R)$ is normal in $\GL(n,\,R)$.
Developing Suslin's ideas, van~der~Kallen showed that for $n\geq4$ the Steinberg group $\St(n,\,R)$ is isomorphic to the group with the following presentation by generators
$$\{X(u,\,v)\mid u\in\Um(n,\,R),\ v\in R^n,\ u^tv=0\}$$ and relations:
\setcounter{equation}{0} \renewcommand{\theequation}{K\arabic{equation}}
\begin{align}
&X(u,\,v)X(u,\,w)=X(u,\,v+w), \label{add1} \\
&X(u,\,v)X(u',\,v')X(u,\,v)\inv=X(t(u,\,v)u',\,t(u,\,v)^* v'). \label{conj1}
\end{align}
In the above formula $t(u,\,v)^*$ denotes the corresponding contragradient matrix, i.\,e. $t(u,\,v)^* = t(v,\,u)\inv = t(v,\,-u)$.
We employ the same notation for contragradient matrices below, i.\,e. for $M\in\E(n,R)$ we set $M^* = (M^t)\inv$.

The above presentation clearly implies that $\phi\colon\St(n,\,R)\rightarrow\E(n,\,R)$ is a central extension.
Notice that we parametrize elements $X(u, v)$ by two columns rather than by a column and a row as it is done in~\cite{vdK}.
Our generator $X(u,v)$ corresponds to van der Kallen's generator $X(u, v^t)$.

Recall that an ideal $I\trianglelefteq R$ is called a \emph{splitting ideal} if the canonical projection $R \twoheadrightarrow R/I$ splits as a unital ring morphism, i.\,e. one has $\pi\sigma=1$:
$$\xymatrix{0\ar@<0.1ex>[r]^{}&I\ar@<0.1ex>[r]^{}&R\ar@<0.5ex>[r]^{\pi}&R/I\ar@<0.5ex>[l]^{\sigma}\ar@<0.1ex>[r]^{}&0}$$
For a splitting ideal $I\trianglelefteq R$ the \emph{relative Steinberg group} $\St(\Phi, R, I)$ can be defined as the kernel $\Ker\big(\St(\Phi,\,R)\epi\St(\Phi,\,R/I)\big)$.
Applying the functor $\St(\Phi, -)$ to $\pi$ and $\sigma$ we get that $\St(\Phi, R) = \St(\Phi, R, I) \rtimes \St(\Phi, R/I)$.
Here the group $\St(\Phi, R/I)$ acts on $\St(\Phi, R, I)$ by conjugation via the splitting map, i.\,e. $(g, h) (g',h') = (g {}^{\sigma^*(h)}g', hh')$.

In the case of an arbitrary $I$ the relative Steinberg group $\St(\Phi, R, I)$ is not a subgroup of $\St(\Phi, R)$ and should be defined as a central extension of the above kernel.
For the purposes of the present text it suffices to consider only relative Steinberg groups corresponding to splitting ideals.
For more information regarding the general case we refer the reader to \cite[Section~3]{SCh}.

\section{Reduction to the case \texorpdfstring{$\rA_3$}{A3}} \label{sec:patching}
First, we make the following definition inspired by Lemmas~2.3,~3.2 of~\cite{Tul}.

\begin{df} \label{df:tlp}
We say that the Steinberg group functor $\St(\Phi, -)$ satisfies {\it Tulenbaev lifting property} if for any ring $R$ and any non-nilpotent element $a \in R$
there exists a map $T_\Phi$ which fits into the following commutative diagram.
\numberwithin{equation}{section}
\setcounter{equation}{1}
\begin{equation} \xymatrix{\St(\Phi,\,R\ltimes XR_a[X],\,XR_a[X]) \ar@{^{(}->}[r] \ar[d] & \St(\Phi, R \ltimes XR_a[X]) \ar[d]_-{\lambda_a^*} \\
                           \St(\Phi,\,R_a[X],\,XR_a[X]) \ar@{-->}[ru]_-{T_\Phi} \ar@{^{(}->}[r] & \St(\Phi, R_a[X])} \label{eq:diagram} \end{equation}
Here, $R\ltimes XR_a[X]$ denotes the semidirect product of the ring $R$ and the $R$-algebra $XR_a[X]$ 
(see~e.\,g.~\cite[Definition~3.2]{SCh}), $\lambda_a^*$ is the morphism of Steinberg groups induced by the morphism $\lambda_a\colon R \to R_a$ of localisation by the powers of $a$ and 
the left arrow is the restriction of $\lambda_a^*$ to the first factor in the decomposition $\St(\Phi, R \ltimes XR_a[X]) \cong \St(\Phi, R\ltimes XR_a[X], XR_a[X]) \rtimes \St(\Phi, R)$.
Notice that $XR_a[X]$ is a splitting ideal for both $R_a[X]$ and $R \ltimes XR_a[X]$.
\end{df}
In fact, one can show that Tulenbaev lifting property implies that the relative Steinberg groups in the left-hand side of the diagram are isomorphic.
To see this, notice that the commutativity of the top (resp. bottom) triangle implies that left arrow is injective (resp. surjective).

We claim that the main results of this paper follow from Tulenbaev's lifting property.
\begin{prop} For any irreducible $\Phi$ of rank $\geq 2$ hold implications \[a) \implies b) \implies c) \implies d).\]
\begin{enumerate}
 \item The Steinberg group functor $\St(\Phi, -)$ satisfies lifting property~\eqref{eq:diagram};
 \item A dilation principle holds for $\St(\Phi, -)$, i.\,e. if $g\in\St(\Phi, A[X], XA[X])$ is such that $\lambda_a^*(g) = 1 \in \St(\Phi, R_a[X])$ then
       for a sufficiently large $n$ one has $ev_{X = a^n Y}^*(h) = 1 \in \St(\Phi, A[Y])$, where $ev_{X=a^n Y}$ denotes the unique $R$-algebra morphism $R[X]\to R[Y]$ sending $X$ to $a^nY$;
 \item The assertion of \cref{tm:lg-principle} holds for $\St(\Phi, -)$;
 \item $\Kt(\Phi, R)$ is a central subgroup of $\St(\Phi, R)$.
\end{enumerate} \end{prop}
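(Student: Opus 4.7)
The plan is to establish each implication in turn, relying on the lifting diagram~\eqref{eq:diagram} for the first step and on standard polynomial-ring techniques for the remaining two.

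\emph{Implication (a) $\Rightarrow$ (b).} Given $g \in \St(\Phi, R[X], XR[X])$ with $\lambda_a^*(g) = 1$, the natural ring homomorphism $R[X] \to R \ltimes XR_a[X]$ sending $X \mapsto (0, X)$ carries the splitting ideal $XR[X]$ into $XR_a[X]$ and so transports $g$ to an element $g' \in \St(\Phi, R \ltimes XR_a[X], XR_a[X])$. Applying the left vertical arrow of~\eqref{eq:diagram} to $g'$ gives exactly $\lambda_a^*(g) = 1$, and commutativity of the upper triangle together with $T_\Phi(1) = 1$ forces $g' = 1$ in $\St(\Phi, R \ltimes XR_a[X])$, hence also in the relative group since $XR_a[X]$ is splitting. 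The equality $g' = 1$ is then witnessed by a finite word in Steinberg relations involving only finitely many elements of $R_a$, so substituting $X = a^n Y$ for $n$ large enough clears denominators simultaneously and pulls the witness into $\St(\Phi, R[Y])$, giving $ev_{X = a^n Y}^*(g) = 1$.

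\emph{Implication (b) $\Rightarrow$ (c).} I would use a standard Quillen--Suslin patching argument. Any $g \in \St(\Phi, R[X])$ is a finite word, so for each maximal $M$ the hypothesis $\lambda_M^*(g) = 1$ already descends to $\lambda_s^*(g) = 1$ for some $s \in R \setminus M$. The set of such $s$ meets every complement of a maximal ideal, and compactness of $\mathrm{Spec}\, R$ yields finitely many $s_1, \ldots, s_k$ generating the unit ideal with $\lambda_{s_i}^*(g) = 1$ for each $i$. By (b), for sufficiently large $n_i$ one has $ev_{X = s_i^{n_i} Y_i}^*(g) = 1$, and a standard patching telescope inside $\St(\Phi, R[Y_1, \ldots, Y_k])$, built along the resulting partition of unity, then yields $g = 1$.

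\emph{Implication (c) $\Rightarrow$ (d).} For $k \in \Kt(\Phi, R)$ and a generator $x_\alpha(\xi)$ of $\St(\Phi, R)$, form $h(X) = [k, x_\alpha(X \xi)] \in \St(\Phi, R[X])$, which satisfies $h(0) = [k, 1] = 1$. Each localization $h_M(X) = [\lambda_M^*(k), x_\alpha(X \lambda_M(\xi))]$ is trivial by the classical centrality of $\Kt$ over local coefficient rings (Matsumoto--Stein--van der Kallen, rank $\geq 3$), strengthened via homotopy invariance so that $\lambda_M^*(k)$ remains central in $\St(\Phi, R_M[X])$, not merely in $\St(\Phi, R_M)$. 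Applying (c) gives $h = 1$, and evaluating at $X = 1$ yields $[k, x_\alpha(\xi)] = 1$; since the generators $x_\alpha(\xi)$ generate $\St(\Phi, R)$, this proves $\Kt(\Phi, R) \subseteq Z(\St(\Phi, R))$.

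\emph{Main obstacle.} The delicate step is the denominator-clearing phase of (a) $\Rightarrow$ (b): the lifting property produces only the abstract equality $g' = 1$, and converting this into explicit triviality over $R[Y]$ requires a careful finite-word analysis that tracks the powers of $a$ occurring in each Steinberg relation used. A secondary technical subtlety lies in (c) $\Rightarrow$ (d), where centrality over polynomial rings with local coefficients is a non-trivial classical input that is borrowed rather than reproved here.
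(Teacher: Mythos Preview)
The paper gives no argument beyond citing \cite{SCh}, so let me assess your sketches on their own terms. The outlines for $(a)\Rightarrow(b)$ and $(b)\Rightarrow(c)$ follow the standard pattern; the ``finite word / clearing denominators'' step in $(a)\Rightarrow(b)$ is really the observation that $R\ltimes XR_a[X]=\varinjlim_n R[Y]$ along the transition maps $Y\mapsto aY$, together with the fact that $\St(\Phi,-)$ preserves filtered colimits, so $g'=1$ already holds at some finite stage~$n$---which is exactly the statement $ev_{X=a^nY}^*(g)=1$.

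The genuine gap is in $(c)\Rightarrow(d)$. You need $\lambda_M^*(k)$ to be central in $\St(\Phi, R_M[X])$, not merely in $\St(\Phi, R_M)$, and neither ingredient you invoke delivers this. Homotopy invariance of $\Kt$ would at best say that the map $\Kt(\Phi,R_M)\to\Kt(\Phi,R_M[X])$ is bijective; it says nothing about whether those elements lie in the centre of the larger group $\St(\Phi,R_M[X])$. And the classical centrality theorems you cite (Matsumoto, Stein, van~der~Kallen) apply over fields, local or semilocal rings, or rings of small stable rank---none of which covers $R_M[X]$ for an arbitrary commutative $R$. For $\Phi=\rA_\ell$ with $\ell\geq 3$ one can indeed quote van~der~Kallen's presentation over arbitrary rings to get centrality over $R_M[X]$ directly, but for $\Phi=\rD_\ell$ or $\rE_\ell$ centrality over such rings is precisely what this entire programme is establishing, so treating it as a borrowed ``classical input'' is circular. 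You have correctly located the delicate point but have not supplied an argument for it.
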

\begin{proof} The specified implications are contained in the proofs of Lemma~15, Theorem~2 and Theorem~1 of \cite{SCh}, respectively. \end{proof}
     
In order to construct the arrow $T_\Phi$ fitting in the diagram~\eqref{eq:diagram} one needs to have some direct description of the relative Steinberg group $\St(\Phi, R, I)$.
For example, in~\cite{Tul} Tulenbaev uses a variant of W.~van~der~Kallen's ``another presentation'' for this purpose. The details of this approach are outlined in the next section.
On the other hand, in~\cite{SCh} the second-named author obtained a theorem which asserts that the relative Steinberg group $\St(\Phi, R, I)$ can be presented as an amalgamated product of
several copies of relative Steinberg groups of smaller rank. 

To formulate this theorem we will need one more definition.
For $s\in I$, $r\in R$ and $\alpha\in \Phi$ we set $z_\alpha(s, r) := x_\alpha(s)^{x_{-\alpha}(r)} \in \St(\Phi, R, I)$.
If the rank of $\Phi$ is at least $2$ then $\St(\Phi, R, I)$ is generated by the set consisting of all elements $z_\alpha(s,r)$, see~\cite[Lemma~5]{SCh}.

Now, denote by $G_0$ the free product of Steinberg groups $\St(\Psi, R, I)$ where $\Psi$ varies over all root subsystems of $\Phi$ which have type $\rA_3$.
Denote by $z_\alpha^\Psi(s,r)$ the image of $z_\alpha(s,r)\in\St(\Psi, R, I)$ under the canonical embedding $\St(\Psi, R, I) \hookrightarrow G_0$.
Now let $G$ be the quotient of $G_0$ modulo all relations of the form $z_\alpha^{\Psi_1}(s, r) = z_\alpha^{\Psi_2}(s, r)$, where $\alpha\in\Psi_1\cap\Psi_2$, $s\in I$, $r\in I$ 
and both $\Psi_1$ and $\Psi_2$ have type $\rA_3$.
The following result has been proved in~\cite{SCh}, see~Theorem~9 combined with Remark~3.16.
\begin{tm*} \label{tm:relPres} For a simply laced irreducible root system $\Phi$ of rank $\geq 3$ the canonical map $G \rightarrow \St(\Phi, R, I)$ is an isomorphism of groups.
\end{tm*}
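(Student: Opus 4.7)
The plan is to construct a homomorphism $\St(\Phi, R, I) \to G$ inverse to the canonical map. Surjectivity of the canonical map is immediate: by \cite[Lemma~5]{SCh} the group $\St(\Phi,R,I)$ is generated by the elements $z_\alpha(s,r)$, and for simply-laced irreducible $\Phi$ of rank $\geq 3$ every root $\alpha \in \Phi$ lies in some $\rA_3$ subsystem $\Psi \subseteq \Phi$ (a quick combinatorial check in $\rA_\ell$, $\rD_\ell$, $\rE_\ell$). Moreover, the natural map $G_0 \to \St(\Phi,R,I)$ assembled from the inclusions $\St(\Psi,R,I) \hookrightarrow \St(\Phi,R,I)$ obviously factors through the quotient $G$, since both sides of each imposed identification $z_\alpha^{\Psi_1}(s,r) = z_\alpha^{\Psi_2}(s,r)$ are sent to the same element $z_\alpha(s,r)$.

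To construct the inverse, I would first fix a presentation of $\St(\Phi,R,I)$ whose generators are the $z_\alpha(s,r)$ and whose relations are of ``Stein type'': additivity in $s$, a cocycle-type identity for the conjugation parameter $r$, and Chevalley-like commutator formulas $[z_\alpha(s,r), z_\beta(s',r')] = \ldots$ involving only roots in the closed subsystem $\lan \alpha, \beta \ran \cap \Phi$. The crucial geometric input is that every rank-$\leq 2$ simply-laced subsystem of $\Phi$ embeds into an $\rA_3$ subsystem $\Psi \subseteq \Phi$ (an elementary case-by-case verification). Hence each defining relation can be realized inside a single $\St(\Psi, R, I)$, and therefore holds in $G$ by construction.

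The main obstacle is well-definedness: the assignment $z_\alpha(s,r) \mapsto z_\alpha^\Psi(s,r)$ must be independent of the choice of $\Psi \ni \alpha$, and when verifying a multi-generator relation one must simultaneously lift every $z_\beta$ that appears, possibly into several different $\rA_3$ subsystems. The imposed identifications $z_\alpha^{\Psi_1} = z_\alpha^{\Psi_2}$ handle a single generator, but to reconcile a full relation one needs to connect all the relevant $\rA_3$ subsystems by chains of pairwise overlaps and propagate the identifications along each step. This amounts to a connectivity argument on the nerve of the covering of $\Phi$ by its $\rA_3$ subsystems, which for rank $\geq 3$ can be reduced by case analysis to explicit verifications inside rank-$4$ ambient subsystems of types $\rA_4$, $\rD_4$, and $\rD_5$, where everything can be confirmed by direct Chevalley commutator computations. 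Once this connectivity is established, the resulting homomorphism $\St(\Phi,R,I) \to G$ is well-defined and inverse to the canonical map on generators, completing the proof.
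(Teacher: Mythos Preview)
The paper does not contain a proof of this statement: it is quoted as a result already established in~\cite{SCh} (``Theorem~9 combined with Remark~3.16''), and the present text only \emph{uses} it to reduce the construction of $T_\Phi$ to the case $\Phi=\rA_3$. So there is no in-paper proof to compare against; I can only assess your proposal on its own terms.

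Your outline captures the right shape of the argument --- surjectivity is indeed trivial, and one does want to build an inverse by sending $z_\alpha(s,r)$ to $z_\alpha^\Psi(s,r)$ for a chosen $\rA_3$ containing $\alpha$. But the point where you write ``I would first fix a presentation of $\St(\Phi,R,I)$ whose generators are the $z_\alpha(s,r)$ and whose relations are of Stein type\ldots'' is doing all the work and is not something you can simply \emph{fix}. The relative group $\St(\Phi,R,I)$ is defined here as a kernel, and obtaining an explicit presentation with a complete list of relations, each supported on a rank~$\leq 2$ subsystem, is precisely the non-trivial content of the theorem. In~\cite{SCh} this is not obtained by writing down Stein-type relations and checking them; rather one works through the semidirect decomposition $\St(\Phi,R)=\St(\Phi,R,I)\rtimes\St(\Phi,R/I)$ and the absolute Steinberg presentation, and the amalgam description of $\St(\Phi,R,I)$ is the \emph{output} of that analysis, not an input. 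Your proposal effectively assumes the conclusion at this step.

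The second soft spot is the ``connectivity argument on the nerve of the covering by $\rA_3$ subsystems.'' As stated it only tells you that for a single root $\alpha$ the element $z_\alpha^\Psi(s,r)$ in $G$ is independent of $\Psi$ --- which is already forced by the relations defining $G$ and needs no argument. What you actually need is that whenever a Steinberg-type relation among several $z_{\alpha_i}$ holds in $\St(\Phi,R,I)$, the corresponding word in the $z_{\alpha_i}^{\Psi_i}$ is trivial in $G$; for this it is not enough that each pair $\Psi_i,\Psi_j$ be joined by a chain of overlapping $\rA_3$'s, because moving one generator along such a chain may disturb the relation you are trying to verify. One really needs that all the roots appearing in a given relation lie in a common $\rA_3$, and that in turn forces you to know exactly which relations suffice --- bringing you back to the missing presentation above.
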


The above theorem allows one to construct the arrow $T_\Phi$ by means of the universal property of colimits.
More precisely, one needs to construct the maps $T_{\Psi}$ for all subsystems $\Psi\subseteq\Phi$ of type $\rA_3$ and then check that
relations $T_{\Psi_1}(z_\alpha^{\Psi_1}(s,r)) = T_{\Psi_2}(z_\alpha^{\Psi_2}(s,r))$ hold for all $\Psi_1$, $\Psi_2$ of type $\rA_3$ containing a common root $\alpha$.
If in the latter statement one replaces $\rA_3$'s with $\rA_4$'s then such a check has already been made in~\cite{SCh}, see Lemma~14.
It is easy to see that the argument of Lemma~14 works verbatim in our situation (one only has to replace all references to Lemma~3 in its proof with references to Lemma~2).
On the other hand, the probem of existence of $T_\Psi$ for $\Psi$ of type $\rA_3$ is not easy and will be solved in the following sections of the paper.

Summing up the above discussion, we have shown that both \cref{tm:centrality} and \cref{tm:lg-principle} follow from the special case $\Phi=\rA_3$ of Tulenbaev's lifting property.

\section{Yet another presentation for the relative Steinberg group.} \label{sec:yap}
In order to construct the arrow $T$ Tulenbaev uses a variant of ``another presentation'' for the relative Steinberg group, cf.~\cite[Proposition~1.6]{Tul}. 
\setcounter{lm}{1}
\begin{prop}\label{prop:TulPres}
For a splitting ideal $I$ and $n\geq 4$ the group $\St(n,\,R,\,I)$ is isomorphic to the group defined by generators
$$\{X(u,\,v)\mid u\in\E(n,\,R)e_1,\ v\in I^n,\ u^tv=0\}$$ and the following set of relations:
\setcounter{equation}{0}
\renewcommand{\theequation}{T\arabic{equation}}
\begin{align}
&X(u,\,v)X(u,\,w)=X(u,\,v+w), \label{add2}\\
&X(u,\,v)X(u',\,v')X(u,\,v)\inv=X(t(u,\,v)u',\,t(v,\,u)\inv v'), \label{conj2}  \\
&X(ur+w,\,v)=X(u,\,vr)X(w,\,v)\,\text{ for }r\in R,\ (u,\,w)\in\Um_{n\times2}(R) \label{add3}.
\end{align}
Moreover, one can replace the last family of relations in the definition of $\St(n, R, I)$ with the following smaller family:
\setcounter{equation}{2} \renewcommand{\theequation}{T\arabic{equation}'}
\begin{equation} X(Me_1r+Me_2,\,M^*e_3a)=X(Me_1,\,M^*e_3ar)X(Me_2,\,M^*e_3a)\, \label{add3'} \end{equation} 
for $r\in R$, $a\in I$ and $M\in E(n, R)$.
\end{prop}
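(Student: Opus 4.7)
Let $\tilde S$ denote the group defined by generators $X(u,v)$ and relations (T1)--(T3), and $\tilde S'$ the group with (T3) replaced by the smaller family (T3'). The argument will follow van der Kallen's blueprint for the absolute ``another presentation''~\cite{vdK} and Tulenbaev's relative variant~\cite[Proposition~1.6]{Tul}, suitably adapted to our column-column parametrisation. The plan is to construct mutually inverse homomorphisms $\rho\colon\tilde S\to\St(n,R,I)$ and $\psi\colon\St(n,R,I)\to\tilde S$, and then to reduce (T3) to (T3') to obtain the \emph{moreover} statement.

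To define $\rho$, I pick $M\in\E(n,R)$ with $Me_1=u$, set $w:=M^tv\in I^n$ (note $w_1=u^tv=0$), and put
\[ \rho(X(u,v)) := \widehat M\,\Bigl(\prod_{j=2}^n x_{1j}(w_j)\Bigr)\,\widehat M\inv, \]
where $\widehat M\in\St(n,R)$ is any lift of $M$. The product in brackets lies in $\St(n,R,I)$, which is normal in $\St(n,R)$ since $I$ splits. Independence of $\widehat M$ follows from centrality of $\Kt(\rA_{n-1},R)$ for $n\geq 4$, due to van der Kallen~\cite{vdK}; independence of $M$ reduces to invariance under the $\E(n,R)$-stabiliser of $e_1$, which is handled by a direct calculation in $\St(n,R)$ using the Chevalley commutator formulas. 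Relation (T1) is immediate from additivity of $x_{1j}$; (T2) follows because conjugation by $\rho(X(u,v))$ projects to conjugation by $t(u,v)$ in $\E(n,R)$ and then lifts modulo the centre; (T3) amounts to lifting the identity $t(ur+w,v)=t(u,vr)\cdot t(w,v)$ in $\E(n,R)$ to $\St(n,R,I)$ via the definition of $\rho$.

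For the inverse $\psi$, I use that $\St(n,R,I)$ is generated by the family $\{z_\alpha(s,r)\mid\alpha\in\rA_{n-1},\ s\in I,\ r\in R\}$ of~\cite[Lemma~5]{SCh}. Set $\psi(z_{ij}(s,r)):=X(e_i+re_j,\,s(e_j-re_i))$; a direct check confirms that the image projects to $\phi(z_{ij}(s,r))=t(e_i+re_j,\,s(e_j-re_i))\in\E(n,R)$, that $e_i+re_j\in\E(n,R)e_1$, and that $(e_i+re_j)^t\cdot s(e_j-re_i)=0$. The bulk of the work is then to verify that the Steinberg commutator relations between the $z_\alpha(s,r)$ are mirrored in $\tilde S$: the relevant identities all unfold from (T1), (T2), (T3) by the type of calculation carried out in~\cite[\S1]{Tul}. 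Once $\psi$ is seen to be a well-defined homomorphism, $\rho\psi=\mathrm{id}$ and $\psi\rho=\mathrm{id}$ follow by inspection on generators.

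Finally, to reduce (T3) to (T3'): given $(u,w)\in\Um_{n\times 2}(R)$ with $u,w\in\E(n,R)e_1$, use $n\geq 4$ and an elementary completion argument (after left-multiplication by $M_0\inv$ reducing to $u=e_1$) to produce $M\in\E(n,R)$ with $Me_1=u$, $Me_2=w$. Since $u^tv=w^tv=0$ and $v\in I^n$, write $v=\sum_{j\geq 3}b_jM^*e_j$ with $b_j\in I$; applying (T1) splits both sides of the desired (T3) into products indexed by $j\geq 3$. Each individual factor identity is an instance of (T3') after replacing $M$ by $MP_j$ for a signed permutation $P_j\in\E(n,R)$ exchanging positions $3$ and $j$, which can be realised by suitable products of Weyl-type elements $w_{ij}(\pm 1)$. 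I expect the main obstacle to be precisely this elementary completion of a unimodular pair to an element of $\E(n,R)$: it genuinely uses $n\geq 4$, and the bookkeeping of signs when transferring (T3') through the various permutations $P_j$ is the most delicate piece of the argument.
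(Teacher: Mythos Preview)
Your approach diverges from the paper's and contains two genuine gaps.

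\textbf{First gap: the inverse map $\psi$.} You propose to define $\psi\colon\St(n,R,I)\to\tilde S$ on the generators $z_{ij}(s,r)$ and then ``verify that the Steinberg commutator relations between the $z_\alpha(s,r)$ are mirrored in $\tilde S$''. But you never say what those relations are, and in fact no presentation of $\St(n,R,I)$ in terms of the $z_\alpha$ is available at this point: producing one is precisely what the proposition is supposed to accomplish. The paper avoids this circularity by working with the \emph{absolute} Steinberg group, whose presentation (S1)--(S3) is given by definition. Concretely, it builds mutually inverse maps between $\St(n,R)$ and the semidirect product $G\rtimes\St(n,R/I)$ (where $G$ is the group on (T1), (T2), (T3')), defining $\psi(x_{ij}(\xi))=(X(e_i,e_j(\xi-\pi(\xi))),\,x_{ij}(\pi(\xi)))$ and checking (S1)--(S3) directly; the isomorphism $G\cong\St(n,R,I)$ then drops out from the splitting $\St(n,R)\cong\St(n,R,I)\rtimes\St(n,R/I)$.

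\textbf{Second gap: the reduction of (T3) to (T3').} Your argument requires that every unimodular pair $(u,w)$ with $u,w\in\E(n,R)e_1$ be of the form $(Me_1,Me_2)$ for some $M\in\E(n,R)$. Even after reducing to $u=e_1$, this amounts to asking that a unimodular $(n-1)$-column lie in the $\E(n-1,R)$-orbit of $e_1$, which fails over general commutative rings. The paper never attempts a direct derivation of (T3) from (T3'). Instead it uses a sandwich: (T3') is visibly a special case of (T3), so there is a canonical surjection $\tilde S'\to\tilde S$; the map $\varphi$ into $\St(n,R)$ is checked to respect the full (T3) because van der Kallen's elements $X(u,v)$ already satisfy it (this is \cite[(1.3)]{Tul}); and the only place (T3') itself is needed is in verifying that $\psi$ respects (S3). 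Since the resulting triangle of surjections composes to the identity, all three groups coincide.

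In short, both difficulties you flag at the end of your sketch are real obstructions to your route, and the paper's semidirect-product trick is exactly what lets one bypass them.
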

In the statement of the third relation we denote by $\Um_{n\times2}(R)$ the set of $n\times2$ \emph{unimodular matrices},
i.\,e. matrices $m$ such that there exists a $2\times n$ matrix $m'$ satisfying $m'm=\left(\begin{smallmatrix}1&0\\0&1\end{smallmatrix}\right)$.

\begin{proof} 
Since the statement of our proposition contains one more claim which is not contained in~\cite[Proposition~1.6]{Tul}, we reproduce below a more detailed version of the original proof of Tulenbaev.
For a time being denote by $G$ the group defined by T1, T2, T3'. We write $\pi(\xi)$ for the image of $\xi\in R$ in the quotient ring and assume that $R/I$ is embedded into $R$. The idea is to construct a pair of mutually inverse maps:
$$\xymatrix{G\rtimes \St(n, R/I) \ar@<0.6ex>[r]^-\varphi &  \ar[l]^-\psi \St(n, R)}$$
where $\St(n, R/I)$ acts on $G$ via $$\,^{X(u,\,v)}X(u',v')=X(t(u,\,v)u',\,t(u,\,v)^*v').$$
We define the map $\psi$ by $\psi(x_{ij}(\xi)) = (X(e_i, (\xi - e_j\pi(\xi)),\ x_{ij}(\pi(\xi)))$. 
We have to check that $\psi$ preserves relations \eqref{add0}--\eqref{ccf2}.
For example, let us check \eqref{ccf2}. 
First of all, recall that in any semidirect product of groups one can compute the commutator of two elements in the following way:
$$[(a, b), (c, d)] = (a \cdot {}^bc \cdot {}^{bdb^{-1}}a^{-1} \cdot {}^{[b, d]}c^{-1} , [b, d]).$$
For $\xi\in R$ set $\xi' = \xi - \pi(\xi)$.
Specializing $a = X(e_i,  e_j\xi')$, $b = x_{ij}(\pi(\xi))$, $c = X(e_j, e_k\eta')$, $d = x_{jk}(\pi(\eta))$ and using the definition of the conjugation action 
one computes the commutator $[\psi(x_{ij}(\xi), \psi(x_{jk}(\eta)]$ as follows:
\setcounter{equation}{2}
\begin{equation} \left(X(e_i,  e_j\xi') \cdot X(e_j + e_i\pi(\xi), e_k\eta') \cdot X(e_i,  e_k\pi(\eta)\xi' -  e_j\xi') \cdot X(e_j, -e_k\eta'),\ x_{ik}(\pi(\xi\eta))\right). \label{eq:proofS3}\end{equation}
\setcounter{lm}{3}
After applying additivity relations \eqref{add2}, \eqref{add3'} and moving the factor $X(e_i, - e_j\xi')$ to the left hand side of the formula by means of relation \eqref{conj2}
one gets the following expression:
$$\left(X(e_j, e_k\eta') \cdot X(e_i, e_k(\xi' \eta' + \pi(\xi)\eta' + \pi(\eta)\xi')) \cdot X(e_j, -e_k\eta'),\ x_{ik}(\pi(\xi\eta))\right).$$
Since $\xi'\eta' + \pi(\xi)\eta' + \pi(\eta)\xi' = \xi\eta - \pi(\xi\eta),$ the above expression simplifies to $\psi(x_{ik}(\xi\eta))$, as claimed.
Verification of the fact that $\psi$ preserves relations \eqref{add0}, \eqref{ccf1} is similar to the above computation but is easier.
In particular, one does not need to use the relation~\eqref{add3'}.

Now define the map $\varphi$ by $\varphi\left((X(u, v), 1)\right) = X(u, v),$ $u\in E(n, R)e_1$, $v\in I^n$, $\varphi\left((1, x_{ij}(\xi))\right) = X(e_i, e_j\sigma(\xi))$.
It is clear that $\varphi$ preserves the relations \eqref{add2}--\eqref{conj2}. The relation~\eqref{add3} follows from~\eqref{add1}--\ref{conj1}, see~\cite[(1.3)]{Tul}, thus, the map $\varphi$ is well-defined.
it is easy to check that $\varphi\psi=1$ therefore $\psi$ is injective. To see that $\psi$ is surjective it remains to notice that the elements $(X(u,v), 1)$, $u\in \E(n, R)e_1$, $v\in I^n$ lie in the image of $\psi$.
\end{proof}

Both ``another presentation'' of van der Kallen and that of Tulenbaev are given in terms of generators parametrised by pairs of vectors,
 where the first one is ``nice'' in some sense i.\,e. unimodular, a column of an elementary matrix or the like, while the second one is arbitrary.
It is easy to formulate the additivity property in the second argument for these generators, cf.~\eqref{add1}, \eqref{add2},
while it is not so easy when it comes to the additivity in the first argument, cf. \eqref{add3}.

The construction of the homomorphism $T\colon\St(n,\,R_a[X],\,XR_a[X])\rightarrow\St(n,\,R\ltimes XR_a[X])$
amounts to choosing certain elements in the group $\St(n,\,R\ltimes XR_a[X])$ and proving that these elements satisfy relations \eqref{add2}--\eqref{add3}, see~\cite[Lemmas~1.2 and~1.3\,c)]{Tul}.
The main problem with this recipe is that the assumption $n\geq5$ is essential for the verification of the fact that relations~\eqref{add3} (or even~\eqref{add3'}) hold.

To generalise Tulenbaev's results to $n=4$ we use a more symmetric presentation with two types of generators: 
$F(u,\,v)$ with $u$ nice and $v$ arbitrary and $S(u,\,v)$ with $u$ arbitrary and $v$ nice.
The generators $F(u,\,v)$ are additive in the second component, while $S(u,\,v)$ are additive in the first one.
When $u$ and $v$ are both nice we require that these two generators coincide.
More formally, we make the following definition.

\begin{df}
For $I\trianglelefteq R$ and $n\geq4$ define $\St^*(n,\,R,\,I)$ to be the group with the set of generators
\begin{multline*}
\{F(u,\,v)\mid u\in\E(n,\,R)e_1,\ v\in I^n,\ u^tv=0\}\,\cup\{S(u,\,v)\mid u\in I^n,\ v\in\E(n,\,R)e_1,\ u^tv=0\}
\end{multline*}
subject to the relations
\setcounter{equation}{0}
\renewcommand{\theequation}{R\arabic{equation}}
\begin{align}
&F(u,\,v)F(u,\,w)=F(u,\,v+w), \label{add4}\\
&S(u,\,v)S(w,\,v)=S(u+w,\,v), \label{add5}\\
&F(u,\,v)F(u',\,v')F(u,\,v)\inv=F(t(u,\,v)u',\,t(v,\,u)\inv v'), \label{conj3} \\
&F(u,\,va)=S(ua,\,v),\ \text{for all}\ a\in I,\,(u,\,v^t)=(M e_1,\,e_2^t M\inv),\,M\!\in\E(n,\,R) \label{coinc}.
\end{align}
\end{df}
Notice that we only described how a generator $F(u, v)$ acts via conjugation on another generator $F(u',v')$ and have omitted three similar relations involving generators $S(u,v)$ of the second type.
The reason for this is the following lemma asserting that the ``missing'' relations automatically follow from \eqref{add4}--\eqref{coinc}.
\begin{lm}
\label{allyouneedisf}
Denote by $\phi\colon\St^*(n,\,R,\,I)\rightarrow\E(n,\,R)$ the natural map sending $F(u,\,v)\mapsto t(u,\,v)$ and $S(u,\,v)\mapsto t(u,\,v)$.
Then the following holds:
\begin{enumerate}
\item $\St^*(n,\,R,\,I)$ is generated as an abstract group by the set of elements 
      $$\{F(u,\,va)\mid a\in I,\ (u,\,v^t)=(Me_1,\,e_2^tM\inv),\,M\in\E(n,\,R)\};$$
\item for any $g\in\St^*(n,\,R,\,I)$ one has 
      \setcounter{equation}{2} \renewcommand{\theequation}{R\arabic{equation}'}
      \begin{equation} gF(u,\,v)g\inv=F(\phi(g)u,\,\phi(g\inv)^tv); \end{equation}
\item for any $g\in\St^*(n,\,R,\,I)$ one has
       \setcounter{equation}{2} \renewcommand{\theequation}{R\arabic{equation}''}
      \begin{equation} gS(u,\,v)g\inv=S(\phi(g)u,\,\phi(g\inv)^tv); \end{equation}
\item there is a ``transpose automorphism'' defined on $\St^*(n, R, I)$ satisfying
      $$F(u,\,v)^t=S(v,\,u),\quad S(u,\,v)^t=F(v,\,u).$$
\end{enumerate} \end{lm}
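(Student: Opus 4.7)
The plan is to prove (a) first, then deploy it as the engine for (b)--(d).

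For (a), I would decompose arbitrary generators via the additivity relations R1 and R2, then use Weyl elements of $\E(n, R)$ to reduce each summand to canonical form. Given $F(u, v)$ with $u = Me_1$: the constraint $u^t v = 0$ forces $M^t v$ to have zero first coordinate, so $v = \sum_{i \geq 2} M^* e_i \cdot w_i$ with $w_i \in I$; R1 splits $F(u,v)$ accordingly, and each factor $F(Me_1, M^* e_i w_i)$ becomes canonical upon replacing $M$ by $M w_{2i}(1)$, since the Weyl element $w_{2i}(1)$ fixes $e_1$ and (being orthogonal on the $\{e_2, e_i\}$-block) satisfies $w_{2i}(1)^* e_2 = e_i$. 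A parallel argument for $S(u, v)$ begins by reparametrizing the ``nice'' argument $v \in \E(n, R) e_1$ as $\tilde M^* e_2$ (always possible, as $\E(n, R)^* e_2 = \E(n, R) e_1$), then using R2 together with R4 (and a Weyl reindexing $w_{1i}(1)$ when the summand's index is $\geq 3$) to convert each summand into a canonical $F$-generator.

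For (b) and (c), I would define a candidate conjugation $\rho_g \colon \St^*(n, R, I) \to \St^*(n, R, I)$ on generators by $\rho_g(F(u, v)) = F(\phi(g) u, \phi(g^{-1})^t v)$ and analogously for $S$, then verify it preserves R1--R4. The nontrivial preservation of R3 reduces to the matrix identity $t(\phi(g)u, \phi(g^{-1})^t v) \phi(g) = \phi(g) t(u, v)$, while R4 preservation holds because $\phi(g) M$ lies in $\E(n, R)$ for any $M \in \E(n, R)$. To then conclude $\rho_g = c_g$ (conjugation by $g$), it suffices by (a) to check the equality on canonical generators; since these are all of $F$-type, the verification is exactly R3. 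Part (c) for canonical $g$ is then obtained by writing any $S(u, v)$ as a product of canonical generators via (a), conjugating each by $g$ using R3 (viewing them as $F$s), and recombining through R4 and the additivity R2.

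For (d), I define $\tau$ on generators by $F(u, v) \mapsto S(v, u)$ and $S(u, v) \mapsto F(v, u)$, and check it extends to an antiautomorphism ($\tau^2 = \mathrm{id}$ is immediate on generators). R1 and R2 swap under $\tau$, and R3 follows from part (c) together with the matrix identity $t(u, v)^t = t(v, u)$. The delicate point is R4, which demands the ``transposed'' identity $S(M^* e_2 \cdot a, Me_1) = F(M^* e_2, Me_1 \cdot a)$. I obtain this as a fresh instance of R4 applied with parametrizing matrix $M^* N$, where $N \in \E(n, R)$ satisfies $Ne_1 = N^t e_1 = e_2$; such $N$ can be taken (for $n \geq 3$) to be $w_{13}(1)^2 \cdot w_{12}(1)$, with the sign-correcting factor $w_{13}(1)^2 = \mathrm{diag}(-1, 1, -1, 1, \ldots)$ compensating for the determinant mismatch that would otherwise prevent a bare transposition from lying in $\E(n, R)$. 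Constructing this sign-correcting element $N$ and thereby closing R4 under $\tau$ is the principal technical hurdle.
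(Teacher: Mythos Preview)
Your approach is correct and follows the same route as the paper: establish (a) by decomposing $F$- and $S$-generators via \eqref{add4}, \eqref{add5}, \eqref{coinc} (the paper leaves the Weyl-element reparametrizations implicit), then deduce (b) from (a) and \eqref{conj3}, (c) by rewriting $S(u',v')$ as a product of $F$-generators and conjugating termwise, and (d) from (c). Your treatment of (b) through an auxiliary endomorphism $\rho_g$ is more elaborate than necessary---the paper simply observes that once (a) writes $g$ as a product of $F$-generators, iterating \eqref{conj3} gives (b) directly---while your explicit verification that \eqref{coinc} is stable under transpose in (d) fills in a detail the paper passes over with ``follows from (c)''.
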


\begin{proof}
Let $F(u,\,v)$ be an arbitrary generator of the first type and let $M\in\E(n,\,R)$ be such that $F(u,\,v)=F(Me_1,\,M^*\tilde v)$. 
By \eqref{add4} we have $$F(u,\,v)=\prod\limits_{k\neq1}F(Me_1,\,M^*e_k\tilde v_k)$$
where $\tilde v_k$ stands for the $k$-th coordinate of $\tilde v=\sum e_i\tilde v_i$.
Applying relations \eqref{add5}, \eqref{coinc} we get
$$S(u',\,v')=\prod\limits_{k\neq1}S(Ne_k\tilde u_k,\,N^*e_1)=\prod\limits_{k\neq1}F(Ne_k,\,N^*e_1\tilde u_k)$$
which implies $(\mathrm{a})$. Obviously, $(\mathrm{b})$ follows from $(\mathrm{a})$. 
To prove $(\mathrm{c})$ it suffices to show that
$$F(u,\,v)S(u',\,v')F(u,\,v)\inv=S(t(u,\,v)u',\,t(u,\,v)^*v').$$
Indeed, for $S(u',\,v')=\prod_{k\neq 1} F(Ne_k,\,N^*e_1\tilde u_k)$ we have
\begin{multline*}
F(u,\,v)S(u',\,v')F(u,\,v)\inv=F(u,\,v)\prod F(Ne_k,\,N^*e_1\tilde u_k)F(u,\,v)\inv=\\
=\prod F(t(u,\,v)Ne_k,\,t(u,\,v)^*N^*e_1\tilde u_k)=\\
=\prod S(t(u,\,v)Ne_k\tilde u_k,\,t(u,\,v)^*N^*e_1)=S(t(u,\,v)u',\,t(u,\,v)^*v').
\end{multline*}
Finally, $(\mathrm{d})$ follows from $(\mathrm{c})$.
\end{proof}

Our next goal is to show that for a splitting ideal $I\trianglelefteq R$ the group $\St^*(n,\,R,\,I)$ is isomorphic to $\St(n,\,R,\,I)=\Ker\big(\St(n,\,R)\epi\St(n,\,R/I)\big)$.
With this end, we construct two mutually inverse homomorphisms
$$\xymatrix{\St^*(n,\,R,\,I)\ar@<0.5ex>[r]^{\iota}&\St(n,\,R,\,I)\ar@<0.5ex>[l]^{\kappa}.}$$
In order to construct the arrow $\kappa$ we use the presentation from Proposition~\ref{prop:TulPres}.
We set $$\kappa(X(u,v)) = F(u,v),\ u\in \E(n, R)e_1,\ v\in I^n.$$ 
It suffices to check that $F(u,v)$ satisfy the relations \eqref{add2}, \eqref{conj2}, \eqref{add3'}.
The validity of relations \eqref{add2}, \eqref{conj2} is obvious, while \eqref{add3'} follows from \eqref{add5} and \eqref{coinc}. Indeed,
\begin{multline} F(Me_1, M^*e_3ar) F(Me_2, M^*e_3a) = S(Me_1ar, M^*e_3) S(Me_2a, M^*e_3) = \\ = S(M(e_1r+e_2)a, M^*e_3) = F(Me_1r+Me_2, M^*e_3a). \qedhere \end{multline}
As for the map $\iota$, we first define it as a homomorphism to the absolute group. 
Then we prove that the image of $\iota$ is contained in $\St(n, R, I)$.
Clearly, the elements $F(u,\,v)$ should go to van der Kallen's generators $X(u,\,v)$.
But now we should find the images for the elements $S(u,\,v)$ as well.
First of all, recall that van der Kallen in~\cite[3.8--3.10]{vdK} constructs the following elements:
$$x(u,\,v)\in\St(n,\,R),\ u^tv=0,\ u_i=0\,\text{ or }\,v_i=0\,\text{ for some }\,1\leq i\leq n.$$
The following definition is the ``transpose'' of \cite[3.13]{vdK}.
\begin{df} For $u\in R^n$, $v\in\E(n,\,R)e_1$, $u^tv=0$, consider the set $\overline Y(u,\,v)\subseteq\St(n,\,R)$
 consisiting of all elements $y\in\St(n,\,R)$ that admit a decomposition $y=\prod x(u^k,\,v)$, 
 where $\sum u^k=u$ and $u^k= (e_pv_q-e_qv_p)c_k$ for some $c_k\in R$, $1\leq p\neq q\leq n$. \end{df}

Since columns of elementary matrices are unimodular, one can find $w\in R^t$ such that $w^tv=1$.
It is not hard to check that \setcounter{equation}{6} \setcounter{lm}{7}
\begin{equation} (w^t\cdot v)\cdot u = \sum_{p<q}u_{pq},\text{ where }u_{pq} = (e_pv_q - e_qv_p)\cdot  (u_pw_q - u_qw_p)\in{}\!R^n, \label{eq:canonical}\end{equation}
therefore $\overline Y(u,\,v)$ is not empty, cf.~\cite[3.1--3.2]{vdK}.
Obviously, for $x\in\overline Y(u,\,v)$ and $y\in\overline Y(w,\,v)$ one has $xy\in\overline Y(u+w,\,v)$.

Repeating~\cite[3.14--3.15]{vdK} verbatim one can show the following.
\begin{lm} For $g\in\St(n,\,R)$ one has
\begin{enumerate} \item $g\overline Y(u,\,v)g\inv\subseteq\overline Y(\phi(g)u,\,\phi(g)^*v)$;
                  \item $\overline Y(u,\,v)$ consists of exactly one element. \end{enumerate} \end{lm}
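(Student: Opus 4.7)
The plan is to transpose van der Kallen's argument from~\cite[3.14--3.15]{vdK} by swapping the roles of the two vector arguments of $x(-,-)$. The commutator identities used there are symmetric in $(u, v)$ modulo the requirement that one of the vectors be unimodular, and here unimodularity of $v$ is built into the hypothesis $v \in \E(n, R) e_1$, so the argument carries over essentially verbatim.

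For part~(a), I would first reduce to the case where $g = x_\alpha(r)$ is a single Chevalley generator by induction on word length. Writing $y = \prod_k x(u^k, v)$ with $u^k = (e_p v_q - e_q v_p) c_k$, it suffices to check that each conjugate $g \cdot x(u^k, v) \cdot g\inv$ lies in $\overline{Y}(\phi(g) u^k, \phi(g)^* v)$. Since $\phi(g) = 1 + r e_{ij}$ is an elementary transvection, both $\phi(g)^* v = v - r v_i e_j$ and $\phi(g) u^k = u^k + r u^k_j e_i$ are rank-one perturbations of the originals; a short case analysis on the overlap of $\{i, j\}$ with $\{p, q\}$ rewrites $\phi(g) u^k$ as a sum of at most two terms of the legal shape $(e_{p'} (\phi(g)^* v)_{q'} - e_{q'} (\phi(g)^* v)_{p'}) c'$. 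Applying van der Kallen's commutator formulas for $x(-,-)$ from~\cite[3.8--3.10]{vdK} then exhibits the conjugate as the required product.

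For part~(b), I would use~(a) to reduce to the case $v = e_1$: since $v \in \E(n, R) e_1$, there exists $g \in \St(n, R)$ with $\phi(g)^* v = e_1$, and applying~(a) to both $g$ and $g\inv$ yields a bijection $\overline{Y}(u, v) \to \overline{Y}(\phi(g) u, e_1)$. When $v = e_1$ the expression $e_p v_q - e_q v_p$ vanishes unless one of $p, q$ equals $1$, in which case it equals $\pm e_{p'}$ for the other index. Hence every nontrivial $u^k$ is a scalar multiple $c_k e_{p_k}$ with $p_k \neq 1$, and $x(c_k e_{p_k}, e_1)$ coincides up to sign with the standard Steinberg generator $x_{p_k 1}(c_k)$ by the construction in~\cite[3.8--3.10]{vdK}. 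The constraint $u^t v = 0$ forces $u_1 = 0$, so any admissible decomposition of $u$ amounts to partitioning each coordinate $u_p$ ($p \neq 1$) into summands; all such partitions yield the same element of $\St(n, R)$ because the generators $x_{p1}$ are additive in their argument and pairwise commute (all indices differ from $1$).

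The main obstacle is the index bookkeeping in part~(a): enumerating the small number of cases for the overlap $\{i, j\} \cap \{p, q\}$ and verifying that each commutator of $x_\alpha(r)$ with a factor $x(u^k, v)$ admits a decomposition of the required shape. Beyond this routine case-split, the argument is purely formal and follows van der Kallen's outline line by line.
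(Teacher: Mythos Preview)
Your proposal is correct and matches the paper's approach exactly: the paper's entire proof is the single sentence ``Repeating~\cite[3.14--3.15]{vdK} verbatim one can show the following,'' and you have accurately spelled out what that transposition consists of, including the reduction to elementary generators in~(a), the index case-split on $\{i,j\}\cap\{p,q\}$, and the reduction to $v=e_1$ in~(b). There is nothing to add.
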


We denote the only element of $\overline Y(u,\,v)$ by $Y(u,\,v)$. 
Now, we can finish the definition of $\iota$ by requiring that $\iota(S(u,\,v)) = Y(u,\,v)$ for $S(u,\,v)\in\St^*(n,\,R,\,I)$. 
To show that the map $\iota$ is well-defined we should check that elements $X(u,\,v)$ and $Y(u,\,v)$ satisfy relations \eqref{add4}--\eqref{coinc},
with $F$'s and $S$'s replaced with $X$'s and $Y$'s.
Here, only the relation \eqref{coinc} is not immediately obvious.
\begin{lm} \label{lm:XY} For $(u,\,v^t)=(Me_1,\,e_2^tM\inv)$, $M\in\E(n,\,R)$, $a\in I$, one has $X(u,\,va)=Y(ua,\,v)$. \end{lm}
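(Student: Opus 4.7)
The plan is to reduce the identity to the case $M=1$ by conjugating with a Steinberg lift $\tilde M$ of $M$, exploiting that both $X$ and $Y$ transform under $\St(n,R)$-conjugation by the same rule.

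In the base case $M=1$ one has $(u,v)=(e_1,e_2)$, and I claim both sides equal $x_{12}(a)$. On the one hand $X(e_1,ae_2)=x_{12}(a)$ by van der Kallen's identification of his presentation with the standard Steinberg one. On the other hand $ae_1=(e_1v_2-e_2v_1)\cdot a$ with $p=1$, $q=2$, $c_1=a$ is already a valid singleton decomposition as in the definition of $\overline Y(ae_1,e_2)$, so $Y(ae_1,e_2)=x(ae_1,e_2)=x_{12}(a)$ as well. Hence the base-case identity holds.

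For arbitrary $M\in\E(n,R)$, fix a factorization of $M$ as a product of elementary transvections $t_{i_kj_k}(r_k)$ and lift it to $\tilde M=\prod x_{i_kj_k}(r_k)\in\St(n,R)$, so that $\phi(\tilde M)=M$. Iterating the defining relation \eqref{conj1} along this factorization yields the general conjugation rule
$$g\cdot X(u_0,v_0)\cdot g\inv=X(\phi(g)u_0,\,\phi(g)^*v_0)\quad\text{for all }g\in\St(n,R),$$
while part~(a) of the preceding lemma, combined with the singleton property~(b) of $\overline Y$, gives the matching
$$g\cdot Y(u_0,v_0)\cdot g\inv=Y(\phi(g)u_0,\,\phi(g)^*v_0).$$
Applying both rules with $g=\tilde M$ to the pairs $(u_0,v_0)=(e_1,ae_2)$ and $(ae_1,e_2)$ respectively, and invoking the base-case identity $X(e_1,ae_2)=Y(ae_1,e_2)$, produces
$$X(Me_1,\,aM^*e_2)\ =\ Y(aMe_1,\,M^*e_2),$$
which is precisely $X(u,va)=Y(ua,v)$.

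The only substantive technical point is upgrading \eqref{conj1} from a conjugation-by-generator rule to a rule valid for an arbitrary $g\in\St(n,R)$. This is a routine induction on the length of $g$ as a word in the Steinberg generators, directly paralleling the reasoning that underpins the conjugation rule for $\overline Y$ in the preceding lemma, and presents no genuine obstacle.
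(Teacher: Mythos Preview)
Your proof is correct and shares the paper's overall structure: both reduce to the case $M=1$ via the conjugation rules for $X$ and $Y$ (the paper's ``in view of the above lemma'' is precisely your conjugation-by-$\tilde M$ step). The only difference is in how the base case $X(e_1,e_2a)=Y(e_1a,e_2)$ is established. You evaluate both sides directly as $x_{12}(a)$, using the singleton decomposition $ae_1=(e_1v_2-e_2v_1)\cdot a$ together with the identity $x(ae_1,e_2)=x_{12}(a)$ for van der Kallen's little-$x$ elements. The paper instead computes the commutator $[Y(-e_3,e_2),\,X(e_1,e_3a)]$ in two ways: expanding via the $X$-conjugation and $X$-additivity rules yields $X(e_1,e_2a)$, while expanding via the $Y$-conjugation and $Y$-additivity rules yields $Y(e_1a,e_2)$. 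Your route is shorter but leans on a fact about the little-$x$ elements that, while immediate from \cite[3.8--3.10]{vdK}, is not restated in the present paper; the commutator trick is self-contained, using only the properties of $X$ and $Y$ already recorded here.
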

\begin{proof}
In view of the above lemma we only need to show that $X(e_1,\,e_2a)=Y(e_1a,\,e_2)$.
This equality can be obtained by computing the commutator $[Y(-e_3,\,e_2),\,X(e_1,\,e_3a)]$ in two ways:
$$ Y(-e_3,\,e_2)X(e_1,\,e_3a)Y(-e_3,\,e_2)\inv\cdot X(e_1,\,-e_3a)=X(e_1,\,e_2a), \text{ and} $$
$$ Y(-e_3,\,e_2)\cdot\,X(e_1,\,e_3a)Y(e_3,\,e_2)X(e_1,\,e_3a)\inv=Y(e_1a,\,e_2). \qedhere $$
\end{proof}

Since $\pi^*(\iota(F(u, v))) = X(\pi(u),\,0)=1$ and $\pi^*(\iota(S(u,v)) = Y(0,\,\pi(v)) = 1$ we get that $\mathrm{Im}(\iota)\subseteq\Ker(\pi^*)=\St(n,\,R,\,I)$. 
It is clear that $\iota\kappa=\mathrm{id}$ hence $\kappa$ is injective.
On the other hand, $\kappa$ is surjective by Lemma~\ref{allyouneedisf}\,$(\mathrm a)$.
Thus, we have demonstrated the following result.

\begin{prop} \label{lm:map-iota}
 For a splitting ideal $I \trianglelefteq R$ and $n\geq 4$ the groups $\St^*(n,\,R,\,I)$ and $\St(n,\,R,\,I)$ are isomorphic.
\end{prop}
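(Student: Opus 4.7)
The plan is to exhibit mutually inverse homomorphisms $\kappa\colon \St(n,R,I) \to \St^*(n,R,I)$ and $\iota\colon \St^*(n,R,I) \to \St(n,R,I)$ using the data assembled in the preceding construction. The map $\kappa$ is built out of \cref{prop:TulPres} by sending $X(u,v)$ to $F(u,v)$; the relations \eqref{add2} and \eqref{conj2} are immediate from \eqref{add4} and \eqref{conj3}, and \eqref{add3'} is the short computation already displayed, which uses the coincidence relation \eqref{coinc} to rewrite $F$-generators as $S$-generators, invokes additivity \eqref{add5} of $S$ in its first argument, and then converts back via \eqref{coinc}.

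The map $\iota$ is defined first at the level of the absolute group $\St(n,R)$ by $F(u,v)\mapsto X(u,v)$ and $S(u,v)\mapsto Y(u,v)$, using the unique element $Y(u,v)$ of $\overline Y(u,v)$. Checking the defining relations R1--R4 of $\St^*(n,R,I)$ reduces to observations already at hand: \eqref{add4} is van der Kallen's \eqref{add1}; \eqref{add5} for $Y$'s is the closure property $\overline Y(u,v)\cdot\overline Y(w,v)\subseteq\overline Y(u+w,v)$; \eqref{conj3} is \eqref{conj1}; and the crucial coincidence relation \eqref{coinc} is precisely \cref{lm:XY}. Since $v \in I^n$ forces $\pi^*(X(u,v))=1$ and $u\in I^n$ forces $\pi^*(Y(u,v))=1$, the image of $\iota$ lies inside $\St(n,R,I)=\Ker(\pi^*)$, as required.

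Finally, $\iota\kappa=\mathrm{id}$ holds on van der Kallen's generators, which shows $\kappa$ is injective; and $\kappa$ is surjective by \cref{allyouneedisf}\,$(\mathrm a)$, since every generator of $\St^*(n,R,I)$ can be rewritten in the form $F(Me_1,M^*e_k a)$, which is manifestly in the image of $\kappa$. I do not anticipate a serious obstacle at this stage: the real technical input sits in \cref{prop:TulPres} and \cref{lm:XY}, both of which are already in hand. The only point requiring care when verifying that $\iota$ is well-defined is that the definition of $\St^*(n,R,I)$ omits three natural conjugation relations involving $S$-generators; however, \cref{allyouneedisf}\,$(\mathrm b)$--$(\mathrm c)$ guarantees these are automatic consequences of R1--R4, so one need only check \eqref{conj3}.
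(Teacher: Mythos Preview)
Your proposal is correct and follows essentially the same route as the paper: construct $\kappa$ via \cref{prop:TulPres} by $X(u,v)\mapsto F(u,v)$ and verify \eqref{add3'} from \eqref{coinc} and \eqref{add5}; construct $\iota$ into the absolute Steinberg group by $F\mapsto X$, $S\mapsto Y$ and use \cref{lm:XY} for \eqref{coinc}; check the image lands in $\Ker(\pi^*)$; then conclude $\kappa$ is injective from $\iota\kappa=\mathrm{id}$ and surjective from \cref{allyouneedisf}\,$(\mathrm a)$. Your extra remark that the omitted $S$-conjugation relations need not be checked separately, thanks to \cref{allyouneedisf}\,$(\mathrm b)$--$(\mathrm c)$, is a helpful clarification but not a departure from the paper's argument.
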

 
\section{The local-global principle for \texorpdfstring{$\Kt$}{K2}}\label{sec:lgp}
Now we turn to the main result of this paper, namely, for $n\geq 4$ we construct the map $$T\colon\St(n,\,R_a[X],\,XR_a[X])\rightarrow\St(n,\,R\ltimes XR_a[X]),$$ that fits into the diagram \eqref{eq:diagram}.

We will prove a somewhat more general result. Let $B$ be a commutative unital ring.
We call an ideal $I$ of $B$ \emph{uniquely $r$-divisible} if for every $m\in I$ there exists a unique $m'\in I$ such that $rm' = m$.
In the sequel we denote such an $m'$ by $\frac{m}{r}$.
Clearly, $I$ is uniquely $r$-divisible if and only if the restriction of the principal localisation morphism $\lambda_r\colon R \to R_r$ to $I$ is an isomorphism.

\begin{tm} \label{a3map} Let $B$ be a ring, $a\in B$ and let $I$ be an ideal of $B$ that is uniquely $a$-divisible.
Then for $n\geq 4$ there exists a map $T\colon\St^*(n,\,B_a,\,I)\rightarrow\St(n,\,B)$ that makes the following diagram commute:
\begin{equation} \xymatrix{ \St^*(n,\,B,\,I)\ar@<-0.0ex>[rr]^{\iota}\ar@<-0.0ex>[d]_{\lambda_{a}^*} && \St(n,\,B)\ar@<-0.0ex>[d]^{\lambda_{a}^*}\\
                            \St^*(n,\,B_a,\,I)\ar@<-0.0ex>[rr]_{\iota}\ar@{-->}[rru]_{T}            && \St(n,\,B_a)} \label{eq:a3diag} \end{equation} \end{tm}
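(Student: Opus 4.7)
The plan is to define $T$ on the generators $F(u,v)$ and $S(u,v)$ of $\St^*(n,B_a,I)$ and then verify it respects the relations \eqref{add4}--\eqref{coinc}. The whole point of the symmetric presentation $\St^*$ from \cref{sec:yap} is that this verification becomes tractable for $n=4$, whereas Tulenbaev's presentation \eqref{add3} would force $n\geq 5$.

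The key enabling observation is that uniquely $a$-divisibility of $I$ forces $B_a\cdot I\subseteq I$ (both inside $B_a$ and, after identification, inside $B$): for $c\in B_a$ and $i\in I$ one writes $c\cdot i=(a^Nc)\cdot(i/a^N)$ with $N$ large enough that $a^Nc\in B$ and $i/a^N\in I\subseteq B$. Consequently, for every $F$-generator $F(u,v)$ with $u\in B_a^n$ and $v\in I^n$, any polynomial expression in the coordinates of $u$ multiplied by at least one coordinate of $v$ automatically lies in $I\subseteq B$.

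For the definition of $T$ on $F(u,v)$ with $u=Me_1\in\E(n,B_a)e_1$, $v\in I^n$: pick $w\in B_a^n$ with $w^tu=1$ and apply van~der~Kallen's canonical decomposition (of $X$-type, dual to \eqref{eq:canonical}) to write $X(u,v)\in\St(n,B_a,I)$ as an explicit product of elementary Steinberg generators $x_{pq}(s_k)$ over $B_a$. By the observation above, each scalar $s_k$ lies in $I\subseteq B$, so the same word defines an element of $\St(n,B)$, which we take as $T(F(u,v))$. The value $T(S(u,v))$ is defined dually via the canonical decomposition of $Y(u,v)$.

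The verification then has three parts. First, well-definedness: different choices of $w$ or of the factorisation of $M$ produce words differing by Steinberg identities whose coefficients already lie in $I$, so the ambiguities are identified already in $\St(n,B)$. Second, the structural relations \eqref{add4}, \eqref{add5} and \eqref{conj3} are inherited from the corresponding properties of the $X$- and $Y$-generators and the standard conjugation formulas; commutativity of both triangles in \eqref{eq:a3diag} is then immediate on generators, because by construction $\lambda_a^*(T(F(u,v)))$ is literally the same word as $\iota(F(u,v))$ read inside $\St(n,B_a)$. Third, the coincidence relation \eqref{coinc}, $F(u,va)=S(ua,v)$, has to be checked: the left and right sides are built from $X$-style and $Y$-style expansions respectively, so their equality in $\St(n,B)$ is not built-in.

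The main obstacle is exactly the verification of \eqref{coinc}. It is the transplant of \cref{lm:XY} to the ring $B$, and its proof will be the same two-way commutator computation with an auxiliary basis vector $e_3$, which is where the hypothesis $n\geq 4$ enters. The upshot is that, after the preparatory work of Sections~\ref{sec:patching}--\ref{sec:yap}, the entire rank-$4$ analogue of Tulenbaev's lifting problem for $\rA_3$ reduces to (a relative version of) this single commutator identity.
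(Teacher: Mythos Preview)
Your approach has a genuine gap at the very first step: the claim that the canonical decomposition of $X(u,v)$ yields a word in elementary generators $x_{pq}(s_k)$ with all $s_k\in I$ is not correct. The decomposition dual to~\eqref{eq:canonical} only produces a factorisation $X(u,v)=\prod x(u,v_{pq})$ into van~der~Kallen's elements $x(u,v')$ with $v'$ having two zero entries; these are \emph{not} elementary generators. When one writes $x(u,v')$ out explicitly as a word in $x_{hk}(\xi)$, the parameters $\xi$ involve the coordinates $u_j\in B_a$ \emph{unaccompanied} by any factor from $I$. For instance, with $n=4$ and $v'=\alpha e_1+\beta e_2$ one gets (using the third coordinate as a pivot) the commutator $[x_{13}(u_1)x_{23}(u_2),\,x_{31}(\alpha)x_{32}(\beta)]$ as a constituent, and $u_1,u_2\in B_a$ need not lie in $B$. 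So the word simply cannot be read in $\St(n,B)$. The observation $B_a\cdot I\subseteq I$ is correct and is used in the paper as well, but it does not rescue this step. For the same reason your well-definedness argument (``Steinberg identities whose coefficients already lie in $I$'') is unjustified: the identities needed to compare two decompositions are identities in $\St(n,B_a)$ and involve the $u_j$ as intermediate parameters.

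The paper's proof avoids this by never attempting to lift $u\in B_a^n$ directly. Instead one chooses $m$ large enough that $ua^m$ admits a lift $\tilde u\in B^n$ (with $\tilde w^t\tilde u=a^{2m}$ for a companion lift $\tilde w$), and then defines $T(F(u,v))=X_{\tilde u,\,v/a^{3m}}(a^{2m})$ using Tulenbaev's elements $X_{\tilde u,\cdot}(a^{2m})$ of \cref{df:TulX}, which live in $\St(n,B)$ because $\tilde u\in B^n$ and $a^{2m}\in I(\tilde u)$. Unique $a$-divisibility of $I$ is what makes $v/a^{3m}$ meaningful and lets the powers of $a$ cancel so that the bottom triangle of~\eqref{eq:a3diag} commutes. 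Independence of $m$ and of the choice of lift is handled by \cref{xproperties}\,(a),(b); relations \eqref{add4}--\eqref{conj3} by \cref{xproperties}\,(c),(d) and their transposes; and the coincidence relation \eqref{coinc} by the two-way commutator computation of \cref{xeqy}, which is indeed the analogue of \cref{lm:XY} you anticipated, but carried out for the $X_{\cdot,\cdot}(b)$ and $Y_{\cdot,\cdot}(b)$ elements rather than for $X$ and $Y$ themselves. In short, the missing idea is to pass from $u$ to the genuinely $B$-valued vector $\tilde u$ at the cost of working with Tulenbaev's non-unimodular elements $X_{\tilde u,\cdot}(a^{2m})$; your outline tries to skip this and therefore does not get off the ground.
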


For a vector $u\in R^n$ we denote by $I(u)$ the ideal generated by the components of $u\in R^n$, i.\,e. $I(u)=\sum\limits_{k=1}^nu_kR$.
To prove \cref{a3map} we follow the approach of Tulenbaev and construct certain family of elements $X_{u,v}(a)\in \St(n,\,B)$, $a\in B$ which contains van der Kallen's generators $X(u, v)$ as a subfamily, i.\,e. $X_{u,v}(1) = X(u, v)$.
The key feature of this definition is that the assumption $u \in\Um(n, B)$ is replaced by a weaker condition $a \in I(u)$ equivalent to imposing that $u$ becomes unimodular after localisation by the powers of $a$.

\setcounter{df}{1}
\begin{df} \label{df:TulX}
For $u \in R^n$ we denote by $D(u)$ the set consisting of all $v\in B^n$ decomposing into a sum $v=\sum_{k=1}^Nv_k$, where $v_1,\ldots,v_N\in B^n$ are such that $u^tv_k=0$ and each $v_k$ has at least two zero coordinates.
Now, for $a\in I(u)$ and $v \in D(u)$ set $X_{u,v}(a) = \prod\limits_{k=1}^Nx(u,\,v_ka)$.
\end{df}
Notice that Tulenbaev uses different notation for van der Kallen elements, e.\,g. he writes $X_{u,v}$ instead of $X(u,\,v)$ and $X(u,\,v)$ instead of $x(u,\,v)$. 
We stick to van der Kallen's notation.

Tulenbaev shows that the factors $x(u,\,v_ka)$, $1\leq k\leq N$, commute, see~\cite[Lemma~1.1\,e)]{Tul}.
He also shows that if $v$ admits another decomposition $v=\sum_{j=1}^Mv'_j$ satisfying the assumptions of Definition~\ref{df:TulX} then one has
$\prod_{k=1}^Nx(u,\,v_ka)=\prod_{j=1}^Mx(u,\,v'_ja)$, see the discussion following \cite[Lemma~1.1]{Tul}.
Thus, the elements $X_{u,v}(a)$ are well-defined. Obviously, $\phi(X_{u,v}(a))=t(u,\,va)$.

\begin{rk} \label{rk:ID}
It follows from the canonical decomposition~\eqref{eq:canonical} that $vb\in D(u)$ for any $b\in I(u)$ and $v$ such that $v^tu=0$.
In particular, if $I$ is uniquely $a$-divisible and $a^k\in I(u)$, then every $v\in I^n$ satisfying $v^tu=0$ is contained in $D(u)$.
\end{rk}

\begin{lm} \label{xproperties}
For $u$, $w\in B^n$, $v, v' \in D(u)$ such that $u^tw=0$, for $a$, $b\in I(u)$, $c\in B$, $g\in\St(n,\,B)$ the following holds:
\begin{enumerate}
\item $X_{u,vc}(a)=X_{u,v}(ca)$,
\item $X_{uc,v}(ca)=X_{u,vc^2}(a)$,
\item $X_{u,v}(a)X_{u,v'}(a)=X_{u,v+v'}(a)$,
\item $g\,X_{u,wb}(a)g\inv=X_{\phi(g)u,\phi(g)^*wb}(a)$.
\end{enumerate}
\end{lm}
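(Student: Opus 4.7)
Parts (a) and (c) are formal consequences of the definition combined with independence of $X_{u,v}(a)$ from the chosen decomposition (recalled just after \cref{df:TulX}). For (a), rescaling every summand of a decomposition $v = \sum v_k$ by $c$ yields a valid decomposition of $vc$ with the same zero-coordinate patterns, so
$X_{u, vc}(a) = \prod x(u, v_k c \cdot a) = \prod x(u, v_k \cdot ca) = X_{u, v}(ca)$.
For (c), concatenating a decomposition of $v$ with one of $v'$ produces a decomposition of $v + v'$; since the $x$-factors pairwise commute by \cite[Lemma~1.1\,e)]{Tul}, the product splits as $X_{u, v}(a) \cdot X_{u, v'}(a)$.

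For (b), any decomposition $v = \sum v_k$ admissible for $D(u)$ is automatically admissible for $D(uc)$, since $u^t v_k = 0$ implies $(uc)^t v_k = 0$ and the zero-coordinate pattern of each summand is preserved. It therefore suffices to establish the termwise bilinear-scaling identity $x(uc, v_k \cdot ca) = x(u, v_k c^2 a)$. This is immediate from van der Kallen's explicit formula for $x(u', v')$ when $v'$ has a zero coordinate, since both sides arise as the same product of Steinberg root unipotents read off from the entries of the rank-one matrix $u v_k^t \cdot c^2 a$. Taking the product over $k$ gives (b).

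Property (d) is the main obstacle: after conjugation by $g$, the transformed vector $\phi(g)^* v_k$ typically loses the two-zero-coordinate property, so van der Kallen's naive termwise formula $g\, x(u, v)\, g\inv = x(\phi(g) u, \phi(g)^* v)$ does not apply directly. The plan is to reduce, by induction on the length of a word expressing $g$ as a product of Steinberg generators, to the case $g = x_\beta(c)$ for a single root $\beta$. For such $g$, the matrix $\phi(g)^*$ differs from the identity by a single rank-one term, and a direct computation using the Steinberg commutator relations expresses $g\, x(u, v_k a)\, g\inv$ as a finite product of $x$-factors whose vector parameters sum to $\phi(g)^* v_k a$ and, after a further refinement made possible by the inequality $n \geq 4$, constitute a valid two-zero-coordinate decomposition. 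Multiplying across $k$ yields a product corresponding to an admissible decomposition of $\phi(g)^* wb \cdot a$ for $D(\phi(g) u)$, and independence from the decomposition identifies this product with $X_{\phi(g) u, \phi(g)^* wb}(a)$.
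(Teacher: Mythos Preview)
Your treatment of (a)--(c) is correct and matches the paper: (a) is immediate from the definition, (b) is the termwise identity recorded as \cite[Lemma~1.1\,d)]{Tul}, and (c) is \cite[Lemma~1.3\,a)]{Tul}.

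For (d) your overall strategy coincides with the paper's --- reduce to a single elementary generator $g=x_{hk}(r)$ and analyse the effect of conjugation on each factor --- but your sketch glosses over the one point that carries all the work. You begin from an \emph{arbitrary} admissible decomposition $v=\sum v_k$ with each $v_k$ having two zero coordinates, and then appeal to an unspecified ``further refinement made possible by $n\geq 4$'' when $\phi(g)^*v_k$ loses a zero. The paper does not proceed this way: it first replaces the given decomposition by the \emph{canonical} one~\eqref{eq:canonical}, so that each summand has the special form $u_{ij}c_{ij}$ with $u_{ij}=e_iu_j-e_ju_i$ supported on \emph{exactly two} coordinates. This much stronger structural constraint is what makes the refinement explicit: after conjugation one has $\phi(g)^*u_{ij}=u'_{ij}+u'_{ki}r$ (with $u'=\phi(g)u$), and \cite[3.11]{vdK} then splits the resulting $x$-factor into two factors of the same canonical shape, each again supported on two coordinates and hence having $n-2\geq 2$ zeros.

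Starting instead from a generic $v_k$ with merely two zero coordinates, the ``refinement'' you invoke is not clearly available: one still gets a single $x$-factor via \cite[3.12]{vdK}, but there is no evident way to split $\phi(g)^*v_k$ into pieces orthogonal to $\phi(g)u$ each with two zeros without falling back on the canonical decomposition anyway. So the gap is not fatal, but the missing idea --- pass to the canonical decomposition first --- is precisely the content of the $n=4$ argument, and your write-up should make it explicit.
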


\begin{proof}
The statement of $(\mathrm a)$ is obvious from the definition, $(\mathrm b)$ follows from~\cite[Lemma~1.1\,d)]{Tul}, $(\mathrm c)$ is exactly the statement of~\cite[Lemma~1.3\,a)]{Tul}.

The assertion of $(\mathrm d)$ is proven in~\cite[Lemma~1.3\,b)]{Tul} under the assumption $n\geq5$. 
Tulenbaev remarks that the assertion remains true for $n=4$. 
Indeed, take $z\in B^n$ such that $z^tu=b$ and write the canonical decomposition \eqref{eq:canonical}:
$$(z^tu)w=\sum_{i<j}u_{ij} c_{ij}, \text{ where } u_{ij}=e_iu_j-e_ju_i,\text{ and }c_{ij}=w_iz_j-w_jz_i.$$
Each $u_{ij} c_{ij}$ is orthogonal to $u$ and, since $n\geq4$, it has at least two zero components.
Thus, $X_{u,wb}(a)=\prod_{i<j}x(u,\, u_{ij} ac_{ij})$. 
It suffices to prove the assertion of $(\mathrm d)$ in the special case $g=x_{hk}(r)$.
If $h\neq i,j$ or $\{h,k\}=\{i,j\}$ the vector $\phi(g)^*u_{ij}ac_{ij}$ still has two zero components.

Now, consider the case $j=h$, $i\neq k$.
By~\cite[3.12]{vdK} one has $$g\,x(u,\,u_{ij}ac_{ij})g\inv=x(\phi(g)u,\,\phi(g)^*u_{ij}ac_{ij}).$$
Set $u' = \phi(g)u$, since
$\phi(g)^*u_{ij} = t_{kj}(-r) \cdot u_{ij} = e_iu_j - e_ju_i + e_kru_i =u'_{ij}+u'_{ki}r,$ 
from~\cite[3.11]{vdK} we obtain that
$$x(\phi(g)u,\,\phi(g)^* u_{ij} ac_{ij})=x\left(u',\,u'_{ij}c_{ij}a\right)\cdot x\left(u',\,u'_{ki} rc_{ij}a\right).$$
Since $u_{ij} = - u_{ji}$, $c_{ij}= - c_{ji}$ the case $i=h$, $j\neq k$ formally follows from the previous one.

Decomposing in this fashion each factor $g\,x(u,u_{ij}ac_{ij})g^{-1}$ for which $\phi(g)^*u_{ij}ac_{ij}$ does not have two zero components, 
 we arrive at a product satisfying the requirements of the definition of $X_{\phi(g)u,\phi(g)^*wb}(a)$.
 \end{proof}

We will also need the ``transposed'' analogue of Tulenbaev's elements $X_{u,v}(a)$.
\begin{df} \label{df:TulY}
Let $v\in B^n$, $a\in I(v)$, and $u \in D(v)$, i.\,e. $u = \sum\limits_{i=1}^M u_i$ for some $u_k$ such that $v^tu_k=0$ and each $u_k$ has two zero components.
Set $Y_{u,v}(a)=\prod_{k=1}^Mx(u_ka,\,v)$. 
Similarly to \ref{df:TulX} one can check that this definition does not depend either on the order of factors or on the choice of a decomposition for $u$.
\end{df}

One can repeat van der Kallen's and Tulenbaev's arguments and prove the following transposed version of Lemma~\ref{xproperties}.
We leave the proof of this lemma to the reader.
\begin{lm}
\label{yproperties}
For $u$, $u'$, $w$ and $v\in B^n$, such that $u$ and $u'$ have decomposition as in the above definition, $w^tv=0$, $a$, $b\in I(v)$, $c\in B$, $g\in\St(n,\,B)$ one has
\begin{enumerate}
\item $Y_{uc,v}(a)=Y_{u,v}(ca)$,
\item $Y_{u,vc}(ca)=Y_{uc^2,v}(a)$,
\item $Y_{u,v}(a)X_{u',v}(a)=Y_{u+u',v}(a)$,
\item $g\,Y_{wb,v}(a)g\inv=Y_{\phi(g)wb,\phi(g)^*v}(a)$.
\end{enumerate}
\end{lm}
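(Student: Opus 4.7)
The plan is to deduce Lemma~\ref{yproperties} from Lemma~\ref{xproperties} by means of a transpose automorphism $\tau$ of $\St(n,B)$, which swaps the two arguments of van der Kallen's elements $x(u,v)$ (up to sign).

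First, I would construct $\tau\colon\St(n,B)\to\St(n,B)$ by setting $\tau(x_{ij}(r))=x_{ji}(-r)$. Verifying that $\tau$ preserves the Steinberg relations is routine: additivity is obvious; the far-commutation relation $[x_{ij}(r),x_{hk}(s)]=1$ is invariant because the conditions $h\neq j$, $k\neq i$ are symmetric under swapping both pairs of indices; and the Chevalley relation $[x_{ij}(r),x_{jk}(s)]=x_{ik}(rs)$ is mapped to $[x_{ji}(-r),x_{kj}(-s)] = [x_{kj}(-s),x_{ji}(-r)]^{-1} = x_{ki}(-rs) = \tau(x_{ik}(rs))$. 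Thus $\tau$ is a well-defined (involutive) automorphism, and by construction it lifts the contragredient involution $M\mapsto (M^t)\inv$ on $\E(n,B)$, i.\,e. $\phi\tau = {}^*\!\phi$.

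Second, I would verify that $\tau$ acts on van der Kallen's auxiliary elements by $\tau(x(u,v))=x(v,-u)$ whenever $x(u,v)$ is defined. Since $t(u,v)^* = t(v,-u)$, on the level of $\E(n,B)$ both sides project correctly; the equality in $\St(n,B)$ follows from the uniqueness built into van der Kallen's construction of $x(u,v)$ in~\cite[3.8--3.10]{vdK}, which produces $x(u,v)$ as a canonical product of standard generators associated with the zero coordinate of $u$ or $v$ — and $\tau$ evidently preserves this canonical form up to swapping the distinguished argument. Granted this identification, Definitions~\ref{df:TulX} and~\ref{df:TulY} immediately yield $\tau(X_{u,v}(a))=Y_{v,-u}(a)$, since a decomposition $v=\sum v_k$ witnessing $v\in D(u)$ for $X_{u,v}(a)$ maps under $\tau$ to a decomposition witnessing $(-u)\in D(v)$ for $Y_{v,-u}(a)$ (and vice versa).

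Finally, each item of Lemma~\ref{yproperties} is obtained by applying $\tau$ to the corresponding item of Lemma~\ref{xproperties} and relabelling. For instance, applying $\tau$ to $X_{u,vc}(a)=X_{u,v}(ca)$ gives $Y_{vc,-u}(a)=Y_{v,-u}(ca)$, which is (a). Applying $\tau$ to (b) of Lemma~\ref{xproperties} gives (b); applying it to the additivity statement (c) gives (c); and for (d), one uses $\phi\tau={}^*\!\phi$ to rewrite the conjugation action of $g$ in~\ref{xproperties}(d) as conjugation by $\tau(g)$, so that applying $\tau$ to (d) of \ref{xproperties} directly produces (d) of \ref{yproperties} after renaming $\tau(g)$ back to $g$ (which is permissible since $\tau$ is an automorphism).

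The main obstacle is Step two: establishing $\tau(x(u,v))=x(v,-u)$ with correct signs requires an honest inspection of van der Kallen's construction of $x(u,v)$, since these are not primitive generators. An alternative — essentially what the paper suggests — is to bypass $\tau$ and repeat the arguments of Tulenbaev and van der Kallen verbatim with the roles of the two slots of $x(\cdot,\cdot)$ interchanged. Part (a) is then immediate from the definition, (b) copies~\cite[Lemma~1.1\,d)]{Tul} on the left slot, (c) follows because the factors $x(u_k a, v)$ pairwise commute in the same manner as before, and (d) uses the canonical decomposition~\eqref{eq:canonical} with the roles of $u$ and $v$ swapped together with the identical case analysis on the index pair $(h,k)$ of $g=x_{hk}(r)$.
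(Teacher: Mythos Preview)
Your second alternative --- repeating van der Kallen's and Tulenbaev's arguments with the two slots of $x(\cdot,\cdot)$ interchanged --- is exactly what the paper does: it states only that ``one can repeat van der Kallen's and Tulenbaev's arguments'' and leaves the proof to the reader. So on that count your proposal coincides with the paper.

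Your first approach via the transpose automorphism $\tau\colon x_{ij}(r)\mapsto x_{ji}(-r)$ is a genuinely different and more structural route. The paper does not introduce $\tau$ on the absolute group $\St(n,B)$; the transpose symmetry appears there only at the level of the relative group $\St^*(n,R,I)$, see Lemma~\ref{allyouneedisf}(d). Your idea formally reduces Lemma~\ref{yproperties} to Lemma~\ref{xproperties} once one knows $\tau\bigl(x(u,v)\bigr)=x(v,-u)$, whence $\tau\bigl(X_{u,v}(a)\bigr)=Y_{v,-u}(a)$; the relabeling you describe for each of (a)--(d) then goes through, and your treatment of (d) using $\phi\circ\tau=(\,\cdot\,)^*\circ\phi$ is correct. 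The trade-off is precisely the one you name: the identity $\tau\bigl(x(u,v)\bigr)=x(v,-u)$ is not a tautology and requires going back into the inductive definition in \cite[3.8--3.10]{vdK}, whereas the paper's ``verbatim repetition'' needs no new ingredient beyond what was already used for Lemma~\ref{xproperties}. Either way the argument is sound; your $\tau$-route is cleaner once the single identity on $x(u,v)$ is checked, while the paper's route is more self-contained.
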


Finally, it remains to show that for a ``nice'' pair $(u,\,v)$ the elements $X$ and $Y$ coincide.
\begin{lm}\label{xeqy}
Let $u$, $v$, $x$, $y$ be elements of $B^n$ and $b\in I(u)\cap I(v)$, $r\in B$ be such that $u^t v = 0$, $x^ty=b$, $x^tv=0$, $u^ty=0$, $x^tu = 0$, $y^tv=0$.
Then one has $X_{u,vb^4r}(b)=Y_{ub^4r,v}(b).$
\end{lm}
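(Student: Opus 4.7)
The plan is to extend the commutator trick of \cref{lm:XY} by replacing the basis vectors $e_1, e_2, e_3$ with the vectors $u$, $v$, and with $x$ (resp.\ $y$) in the two roles of $e_3$. Concretely, set
$$g := Y_{-xb,\, v}(b), \qquad h := X_{u,\, ybr}(b)$$
in $\St(n,\,B)$ and compute $[g, h]$ in two different ways. These elements are well-defined: from $x^t v = 0$ and $b \in I(v)$, \cref{rk:ID} gives $-xb \in D(v)$; symmetrically, $y^t u = 0$ and $b \in I(u)$ yield $ybr \in D(u)$. A direct computation gives $\phi(g) = 1 - xb^2 v^t$ and $\phi(h) = 1 + b^2 r u y^t$; the orthogonalities $x^t v = 0$ and $u^t y = 0$ ensure that the contragredients take the simple form $\phi(g)^* = 1 + vb^2 x^t$ and $\phi(h)^* = 1 - b^2 r y u^t$.

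For the first expansion, apply \cref{xproperties}(d) to $g h g^{-1}$. The decisive computation is the cross-term $\phi(g)^*(ybr) = ybr + v b^2 x^t y \cdot br = ybr + v b^4 r$, where the pairing $x^t y = b$ enters crucially; combined with $\phi(g) u = u$ this gives $g h g^{-1} = X_{u,\, ybr + v b^4 r}(b)$. Splitting via the additivity of \cref{xproperties}(c) and using that $X_{u,\,\cdot}(b)$-generators with common first argument commute, one concludes $[g, h] = X_{u,\, v b^4 r}(b)$.

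Dually, \cref{yproperties}(d) applied to $h g h^{-1}$, together with $y^t x = b$ (giving $\phi(h)(-xb) = -xb - b^4 r u$) and $\phi(h)^* v = v$, yields $h g h^{-1} = Y_{-xb - b^4 r u,\, v}(b)$; additivity splits this as $g \cdot Y_{-u b^4 r,\, v}(b)$. Rearranging, $[g, h]^{-1} = g \cdot Y_{-u b^4 r,\, v}(b) \cdot g^{-1}$, and since $\phi(g) u = u$ and $\phi(g)^* v = v$, another application of \cref{yproperties}(d) shows that conjugation by $g$ fixes this $Y$-generator; inverting yields $[g, h] = Y_{u b^4 r,\, v}(b)$. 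Equating the two expressions for $[g, h]$ completes the proof. The main obstacle is the careful bookkeeping of the powers of $b$ (four factors appear in the final answer, distributed among the embeddings into the respective $D(\cdot)$-sets, the $a$-slots of $g$ and $h$, and the pairing $x^t y = b$), together with verifying at each step that the intermediate $X$- and $Y$-generators satisfy the well-definedness conditions of \cref{df:TulX} and \cref{df:TulY} via \cref{rk:ID}.
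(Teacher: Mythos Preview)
Your proof is correct and follows essentially the same commutator trick as the paper: both compute a commutator of a $Y$-element and an $X$-element in two ways. The only cosmetic differences are that the paper places the scalar $r$ on the $Y$-side (using $[Y_{-xbr,v}(b),\,X_{u,yb}(b)]$) and, in the second expansion, conjugates $g^{-1}$ by $h$ rather than $g$ by $h$, which spares the extra conjugation-by-$g$ step you perform at the end.
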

\begin{proof}
Compute $g=[Y_{-xbr,v}(b),\,X_{u,yb}(b)]$ in two different ways using Lemmas~\ref{xproperties}~and~\ref{yproperties}:
$$ g = X_{t(xb^2r,-v)u,\,t(xb^2r,-v)^*yb}(b)X_{u,-yb}(b) = X_{u,\,yb+vb^4r}(b)\, X_{u,-yb}(b) = X_{u,vb^4r}(b),$$
$$ g = Y_{-xbr,v}(b) Y_{t(u,yb^2)xbr,\,t(u,yb^2)^*v}(b)= Y_{-xbr,v}(b)\, Y_{xbr+ub^4r,\,v}(b)=Y_{ub^4r,v}(b).\qedhere$$
\end{proof}

Now, we are all set to construct the desired map $T\colon\St(n,\,B_a,\,I)\rightarrow\St(n,\,B).$

\begin{proof}[Proof of Theorem~\ref{a3map}]
Consider $u=Me_1$, $M\in\E(n,\,B_a)$ and $v\in I^n$ such that $u^tv=0$.
Set $w=M^*e_1$. Since $w^tu=1$ there exist vectors $\tilde w$, $\tilde u\in B^n$ and a natural number $m$ such that 
$$ \lambda_a(\tilde{w})=wa^m,\ \lambda_a(\tilde{u})=ua^m\text{ and moreover }\tilde u^tv=0\text{ and }\tilde w^t\tilde u=a^{2m}.$$
It is clear that $a^{2m}\in I(\tilde u)$, moreover by~\cref{rk:ID} we have $v/a^{3m} \in D(\tilde u)$, therefore we can set $T(F(u,\,v))=X_{\tilde u,v/a^{3m}}(a^{2m})$. 

The first two assertions of Lemma~\ref{xproperties} guarantee that this definition does not depend on the choice of $m$ and the liftings $\tilde u$ and $\tilde w$.
Similarly, we can define $T(S(u,\,v))=Y_{u/a^{3m},\tilde v}(a^{2m})$.
In view of Lemmas~\ref{xproperties}~and~\ref{yproperties} the map $T$ preserves relations \eqref{add4}--\eqref{conj3}.

It remains to check that for $u = Me_1$, $v=M^*e_2$ and $c\in I$ one has
\begin{equation}T(F(u, cv)) = X_{\tilde u,\,\tilde v c/a^{4m}}(a^{2m}) = Y_{\tilde uc/a^{4m},\, \tilde v}(a^{2m}) = T(S(uc, v)). \label{eq:last}\end{equation}
Here $\tilde u, \tilde v \in B^n$ are liftings of $u$ and $v$ such that $\lambda_a(\tilde u)= ua^m$, $\lambda_a(\tilde v)= va^m$ and $\tilde u^t \tilde v = 0$.
We can also assume that $m$ is large enough to ensure that $a^{2m}\in I(\tilde{u})\cap I(\tilde{v})$ and there exist $\tilde{x}$, $\tilde{y}$ such that the following equations hold:
$$\lambda_a(\tilde{x})=Me_3a^m,\ \lambda_a(\tilde{y})=M^*e_3a^m\text{ and }\tilde{x}^t\tilde{y}=a^{2m},\ \tilde{x}^t\tilde{v} = 0,\ \tilde{u}^t\tilde{y} =0,\ \tilde{x}^t\tilde{u} = 0,\ \tilde{y}^t\tilde{v}=0.$$
To obtain~\eqref{eq:last} it remains to apply Lemma~\ref{xeqy} (with $b=a^{2m}$, $r=c/a^{12m}$).
Thus, the map $T$ is well-defined.

Commutativity of the diagram~\eqref{eq:a3diag} follows directly from the definitions of elements $X(u,\,v)$, $Y(u,\,v)$ and $X_{u,v}(a)$, $Y_{u,v}(a)$. \end{proof}

\printbibliography

\end{document}